\def\@url#1{{\tt\def~{\lower3.5pt\hbox{\char'176}}\def\_{\char'137}#1}}
\newtheorem{theorem}{Theorem}[section]
\newtheorem{corollary}[theorem]{Corollary}
\newtheorem{proposition}[theorem]{Proposition}
\newtheorem{lemma}[theorem]{Lemma}
\theoremstyle{definition}
\newtheorem{definition}[theorem]{Definition}
\newtheorem{example}[theorem]{Example}
\newtheorem{remark}[theorem]{Remark}
\newtheorem{construction}[theorem]{Construction}
\theoremstyle{plain}
\newtheorem{xxthm}{Theorem}
\newtheorem{xxprop}[xxthm]{Proposition}
\newtheorem{xxlem}[xxthm]{Lemma}
\newcommand{\Mdef}[2]{\newcommand{#1}{\relax \ifmmode #2 \else $#2$\fi}}
\Mdef{\bA}{\mathbb{A}}
\Mdef{\bB}{\mathbb{B}}
\Mdef{\bC}{\mathbb{C}}
\Mdef{\bD}{\mathbb{D}}
\Mdef{\bE}{\mathbb{E}}
\Mdef{\bF}{\mathbb{F}}
\Mdef{\bG}{\mathbb{G}}
\Mdef{\bH}{\mathbb{H}}
\Mdef{\bI}{\mathbb{I}}
\Mdef{\bJ}{\mathbb{J}}
\Mdef{\bK}{\mathbb{K}}
\Mdef{\bL}{\mathbb{L}}
\Mdef{\bM}{\mathbb{M}}
\Mdef{\bO}{\mathbb{O}}
\Mdef{\bP}{\mathbb{P}}
\Mdef{\bQ}{\mathbb{Q}}
\Mdef{\bR}{\mathbb{R}}
\Mdef{\bS}{\mathbb{S}}
\Mdef{\bT}{\mathbb{T}}
\Mdef{\bU}{\mathbb{U}}
\Mdef{\bV}{\mathbb{V}}
\Mdef{\bW}{\mathbb{W}}
\Mdef{\bX}{\mathbb{X}}
\Mdef{\bY}{\mathbb{Y}}
\Mdef{\bZ}{\mathbb{Z}}
\Mdef{\bbS}{\mathbb{S}}
\Mdef{\scrA}{\mathscr{A}}
\Mdef{\scrB}{\mathscr{B}}
\Mdef{\scrC}{\mathscr{C}}
\Mdef{\scrD}{\mathscr{D}}
\Mdef{\scrE}{\mathscr{E}}
\Mdef{\scrF}{\mathscr{F}}
\Mdef{\scrG}{\mathscr{G}}
\Mdef{\scrH}{\mathscr{H}}
\Mdef{\scrI}{\mathscr{I}}
\Mdef{\scrJ}{\mathscr{J}}
\Mdef{\scrK}{\mathscr{K}}
\Mdef{\scrL}{\mathscr{L}}
\Mdef{\scrM}{\mathscr{M}}
\Mdef{\scrN}{\mathscr{N}}
\Mdef{\scrO}{\mathscr{O}}
\Mdef{\scrP}{\mathscr{P}}
\Mdef{\scrQ}{\mathscr{Q}}
\Mdef{\scrR}{\mathscr{R}}
\Mdef{\scrS}{\mathscr{S}}
\Mdef{\scrT}{\mathscr{T}}
\Mdef{\scrU}{\mathscr{U}}
\Mdef{\scrV}{\mathscr{V}}
\Mdef{\scrW}{\mathscr{W}}
\Mdef{\scrX}{\mathscr{X}}
\Mdef{\scrY}{\mathscr{Y}}
\Mdef{\scrZ}{\mathscr{Z}}
\Mdef{\mcA}{\mathcal{A}}
\Mdef{\mcB}{\mathcal{B}}
\Mdef{\mcC}{\mathcal{C}}
\Mdef{\mcD}{\mathcal{D}}
\Mdef{\mcE}{\mathcal{E}}
\Mdef{\mcF}{\mathcal{F}}
\Mdef{\mcG}{\mathcal{G}}
\Mdef{\mcH}{\mathcal{H}}
\Mdef{\mcI}{\mathcal{I}}
\Mdef{\mcJ}{\mathcal{J}}
\Mdef{\mcK}{\mathcal{K}}
\Mdef{\mcL}{\mathcal{L}}
\Mdef{\mcM}{\mathcal{M}}
\Mdef{\mcN}{\mathcal{N}}
\Mdef{\mcO}{\mathcal{O}}
\Mdef{\mcP}{\mathcal{P}}
\Mdef{\mcQ}{\mathcal{Q}}
\Mdef{\mcR}{\mathcal{R}}
\Mdef{\mcS}{\mathcal{S}}
\Mdef{\mcT}{\mathcal{T}}
\Mdef{\mcU}{\mathcal{U}}
\Mdef{\mcV}{\mathcal{V}}
\Mdef{\mcW}{\mathcal{W}}
\Mdef{\mcX}{\mathcal{X}}
\Mdef{\mcY}{\mathcal{Y}}
\Mdef{\mcZ}{\mathcal{Z}}
\Mdef{\tA}{\tilde{A}}
\Mdef{\tB}{\tilde{B}}
\Mdef{\tC}{\tilde{C}}
\Mdef{\tE}{\tilde{E}}
\Mdef{\tH}{\tilde{H}}
\Mdef{\tK}{\tilde{K}}
\Mdef{\tL}{\tilde{L}}
\Mdef{\tM}{\tilde{M}}
\Mdef{\tN}{\tilde{N}}
\Mdef{\tP}{\tilde{P}}
\Mdef{\Ab}{\overline{A}}
\Mdef{\Bb}{\overline{B}}
\Mdef{\Cb}{\overline{C}}
\Mdef{\Db}{\overline{D}}
\Mdef{\Eb}{\overline{E}}
\Mdef{\Fb}{\overline{F}}
\Mdef{\Gb}{\overline{G}}
\Mdef{\Hb}{\overline{H}}
\Mdef{\Ib}{\overline{I}}
\Mdef{\Jb}{\overline{J}}
\Mdef{\Kb}{\overline{K}}
\Mdef{\Lb}{\overline{L}}
\Mdef{\Mb}{\overline{M}}
\Mdef{\Nb}{\overline{N}}
\Mdef{\Ob}{\overline{O}}
\Mdef{\Pb}{\overline{P}}
\Mdef{\Qb}{\overline{Q}}
\Mdef{\Rb}{\overline{R}}
\Mdef{\Sb}{\overline{S}}
\Mdef{\Tb}{\overline{T}}
\Mdef{\Ub}{\overline{U}}
\Mdef{\Vb}{\overline{V}}
\Mdef{\Wb}{\overline{W}}
\Mdef{\Xb}{\overline{X}}
\Mdef{\Yb}{\overline{Y}}
\Mdef{\Zb}{\overline{Z}}
\newcommand{\co}{\colon}
\def\endash{\mathchar"707B}
\newcommand{\leftmod}{\endash \textnormal{mod}}
\newcommand{\lra}{\longrightarrow}
\Mdef{\id}{\mathrm{Id}}
\newcommand{\adjunction}[4]{
\xymatrix{
#1:#2 \ar@<0.7ex>[r] &
\ar@<0.7ex>[l] #3:#4
}}
\Mdef{\bhom}{\mathbf{\hat{H}om}}
\Mdef{\Mod}{\mathrm{mod}}
\newcommand{\colim}{\mathop{  \mathrm{colim }} }
\DeclareMathOperator{\mackeyfunctor}{Mackey}
\DeclareMathOperator{\sheaffunctor}{Sheaf}
\newcommand{\weylsheaf}[1]{\textnormal{Weyl} \endash #1 \endash \textnormal{sheaf}_{\bQ}(\sub #1)}
\newcommand{\mackey}[1]{\textnormal{Mackey}_{\bQ}(#1)}
\newcommand{\sub}{\mathcal{S}}
\newcommand{\opensub}{\underset{\textnormal{open}}{\leqslant}}
\newcommand{\closedsub}{\underset{\textnormal{closed}}{\leqslant}}
\newcommand{\closedsuper}{\underset{\textnormal{closed}}{\geqslant}}
\newcommand{\opennormalsub}{\underset{\textnormal{open}}{\trianglelefteqslant}}
\DeclareMathOperator{\stab}{\textnormal{stab}}
\DeclareMathOperator{\core}{\textnormal{Core}}
\newcommand{\burnsidering}{\textbf{\textsf{\upshape{A}}}_\bQ} 
\DeclareMathOperator{\sky}{Sky}
\newcommand{\idemset}[2]{e^{#1}_{#2}}
\newcommand{\idem}[3]{e^{#1}_{#2,#3}}
\title[Weyl sheaves and Mackey functors]{The equivalence between rational \texorpdfstring{$G$}{G}-sheaves 
and rational \texorpdfstring{$G$}{G}-Mackey functors for profinite \texorpdfstring{$G$}{G}}
\author[Barnes]{David Barnes}
\address[Barnes]{Mathematical Sciences Research Centre, Queen's University Belfast}
\author[Sugrue]{Danny Sugrue} 
\address[Sugrue]{Mathematical Sciences Research Centre, Queen's University Belfast}
\thanks{
The second author gratefully acknowledges support from the
Engineering and Physical Sciences Research Council under Grant 1631308.
The authors wish to thank the anonymous referee for their many useful suggestions that improved the paper.
}
\begin{document}
\begin{abstract}
For $G$ a profinite group, we construct an equivalence between rational $G$-Mackey functors
and a certain full subcategory of $G$-sheaves over the space of closed subgroups of $G$ 
called Weyl-$G$-sheaves. This subcategory consists of those sheaves 
whose stalk over a subgroup $K$ is $K$-fixed. 

This extends the classification of rational $G$-Mackey functors for finite $G$
of Th\'{e}venaz and Webb, and Greenlees and May to a new class of examples. 
Moreover, this equivalence is instrumental in the classification of rational $G$-spectra
for profinite $G$, as given in the second author's thesis.
\end{abstract}

\maketitle

\tableofcontents

\newpage

\section{Introduction}

The classification of rational Mackey functors for finite groups 
is well-known and highly useful. 
An algebraic version of this result is given by 
Th\'{e}venaz and Webb \cite{tw95}. 
A version more suited to algebraic topology can be found in
work of Greenlees and May \cite[Appendix A]{gremay95}.
In this case the classification is given as an equivalence of categories
\begin{align}
\mackey{G} \cong \prod_{(H) \leqslant G} \bQ[W_G H] \leftmod \label{eqn:mackeyclass}
\end{align}
where the product runs over $G$-conjugacy classes of subgroups of $G$.
Greenlees and May proved this result in order to classify rational 
$G$-spectra (for finite $G$) in terms of an algebraic model. 
A gentle introduction to classifying rational $G$-spectra
(and similar results) can be found in work of the first author and K\k{e}dziorek 
\cite{BKclassify}. 

Profinite groups (compact Hausdorff totally-disconnected topological groups)
are an important class of topological groups that contain all finite groups. 
Examples of where profinite groups occur 
include the automorphism groups of Galois extensions of fields, 
the \'{e}tale fundamental groups of algebraic geometry
and the Morava stabilizer group $\mathbb{S}_n$ from chromatic homotopy theory.
The standard example is the $p$-adic integers $\bZ_p^\wedge$ for $p$ a prime. 

The main result of this paper is an extension of the classification 
of rational Mackey functors to profinite groups. 
\begin{xxthm}\label{mainthm}
For $G$ a profinite group there is an exact equivalence 
\[
\adjunction{\mackeyfunctor}{\weylsheaf{G}}{\mackey{G}}{\sheaffunctor.}
\]
\end{xxthm}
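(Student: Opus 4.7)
The plan is to construct the two functors explicitly and then verify that the resulting adjunction is an equivalence by reducing each check to the classical finite-group decomposition \eqref{eqn:mackeyclass} of Th\'{e}venaz--Webb and Greenlees--May, applied to the finite quotients $G/N$ for $N$ running over the open normal subgroups of $G$. The guiding picture is that the stalk of a Weyl-$G$-sheaf at a closed subgroup $K$ should play the role of the $\bQ[W_G K]$-module factor in the finite product decomposition, now varying continuously over the profinite space $\sub G \cong \lim_N \sub(G/N)$.

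For $\sheaffunctor$, given $M \in \mackey{G}$ I would define the stalk at a closed subgroup $K$ by cutting out the Weyl-component via Burnside-ring idempotents at each finite level: for every $N \opennormalsub G$ the classical theory produces an idempotent $e_{KN/N} \in \burnsidering(G/N)$ that extracts the Weyl-component of $M$ at the image $KN/N$ of $K$, and I would set
\[
\sheaffunctor(M)_K \;=\; \colim_{N} e_{KN/N} \cdot M(G/KN),
\]
the colimit running over $N \opennormalsub G$. These stalks assemble into a $G$-equivariant \'{e}tale space over $\sub G$ using the conjugation isomorphisms built into $M$; each stalk is $K$-fixed since every term is $KN/N$-fixed and this property survives the filtered colimit. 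For $\mackeyfunctor$, given a Weyl-$G$-sheaf $F$ I would define $\mackeyfunctor(F)(G/K)$ using sections of $F$ over the closed subspace of $\sub G$ consisting of subgroups subconjugate to $K$, taking $K$-fixed points so that the result depends only on the $G$-conjugacy class of $K$. Restriction comes from restriction of sections along inclusions of closed subspaces, induction from transfer along the finite-to-one projections $\sub G \to \sub(G/N)$, and conjugation from the $G$-equivariance of $F$; the Mackey double-coset formula is verified at each finite level and then lifted.

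The main obstacle is to match the two global assemblies, not merely their finite-level shadows. My strategy is to exhibit both composites $\sheaffunctor \circ \mackeyfunctor$ and $\mackeyfunctor \circ \sheaffunctor$ as cofiltered limits over $N \opennormalsub G$ of the finite equivalences applied to $G/N$. This requires (i) identifying the $N$-fixed Mackey functor $M^N$ with the data used in the stalk formula above, (ii) showing that sections of $\sheaffunctor(M)$ over $\sub G$ reassemble to recover $M(G/K)$ after taking appropriate fixed points, and (iii) verifying exactness of both functors, which for $\sheaffunctor$ follows from exactness of stalks and for $\mackeyfunctor$ reduces to acyclicity statements for the relevant sheaves on the profinite base. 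Step (ii) is the hardest: it is the point where continuity of $M$ on continuous finite $G$-sets must be matched with continuity of sections of an \'{e}tale space over a profinite base, and is the step most likely to require input genuinely specific to the profinite setting beyond the finite classification.
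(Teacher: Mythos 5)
Your two constructions are essentially the ones the paper uses: your stalk formula $\colim_N e_{KN/N}\cdot M(KN)$ is the paper's $\sheaffunctor(M)_K=\colim_N \idem{NK}{N}{NK}M(NK)$ (modulo the slip that the idempotent should live in $\burnsidering(NK)$, acting on the value of $M$ at the open subgroup $NK$, not in $\burnsidering(G/N)$), and evaluating $\mackeyfunctor(F)$ at an open subgroup via fixed sections is Construction \ref{con:mackey_construct}, which uses $F(\sub H)^H$ rather than sections over the full subconjugacy-closed subspace. One concrete error at this level: induction is not a ``transfer along the finite-to-one projections $\sub G\to\sub(G/N)$''; those projections relate $\sub G$ to its finite quotient spaces, not $\sub K$ to $\sub H$ for $K\leqslant H$ open. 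Induction must be defined as the sum over a transversal of $K$ in $H$ of the translates of the extension-by-zero of a section (well defined because the section is $K$-fixed), and the Mackey axiom is then a direct double-coset computation, not something that obviously ``lifts'' from finite levels.

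The genuine gap is in the equivalence itself. Your plan to exhibit both composites as cofiltered limits over $N$ of the finite equivalences for $G/N$ does not work as stated: the stalks and germs of $\sheaffunctor(M)$ are \emph{filtered colimits} of finite-level data, and a sheaf on the profinite space $\sub G$ is not an inverse limit of sheaves on the $\sub(G/N)$, so neither composite is naturally a cofiltered limit of the finite classifications. The paper instead proves the unit and counit are isomorphisms directly, and the two inputs that make this possible are exactly what your proposal lacks: Lemma \ref{lem:fixinflate}, the profinite analogue of $e_K^HM(H)\cong e_K^KM(K)^{N_H(K)}$, which controls how an $H$-fixed element is determined by its idempotent pieces at the subgroups $NL$; and Proposition \ref{prop:Weylequi}, which says a germ of a Weyl-$G$-sheaf is represented by an $NK$-equivariant section on a basic neighbourhood. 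The first, combined with compactness of $\sub H/H$ and the clopen basis $O_H(N,NL)$, gives injectivity of $M(H)\to\mackeyfunctor\sheaffunctor(M)(H)$ (Lemma \ref{lem:injectivemaptostalks}) and, with surjectivity coming from the way the topology on $\sheaffunctor(M)$ is generated by the sections $\theta_H(m)$, yields Proposition \ref{prop:homeo}; the second is what makes the stalkwise comparison $F_K\cong\sheaffunctor\mackeyfunctor(F)_K$ go through, since one must replace arbitrary sections by locally sub-equivariant ones before taking fixed points. You correctly flag your step (ii) as the hard point ``requiring input genuinely specific to the profinite setting,'' but that input is precisely these two statements plus the compactness patching argument, and without them (and without the verification that $\theta$ commutes with induction, which needs another Mackey-axiom computation) the proposal does not yet constitute a proof.
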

Here $\sub G$ denotes the space of closed subgroups of $G$, 
see Definition \ref{def:SG} for the topology.
A Weyl-$G$-sheaf over $\sub G$ is a $G$-equivariant sheaf over $\sub G$
such that the stalk at a closed subgroup $K \leqslant G$ is $K$-fixed.
Hence, the stalk at $K$ has an action of the Weyl group $W_G K$. 
This result appears as Theorem \ref{thm:equivalencemain} in the main body.

The reason why Mackey functors for finite groups have a simpler description is that 
$\sub G$ is a discrete space when $G$ is finite. In which case, a 
Weyl-$G$-sheaf over $\sub G$ is a product of $\bQ$-modules with 
actions of the Weyl groups. Choosing representatives for conjugacy classes 
gives the description (\ref{eqn:mackeyclass}) above. 
Furthermore, the rational Burnside ring for a profinite group 
is isomorphic to the ring of continuous functions from $\sub G /G$  to $\bQ$.
Hence, when $G$ is finite, the rational Burnside ring is simply a product of copies of $\bQ$, 
indexed over the set of conjugacy classes of subgroups of $G$.

Our next result, see Lemma \ref{lem:fixinflate}, extends a vital 
identity on rational $G$-Mackey functors from the finite case to the profinite case.
In the finite setting, the restriction map induces an isomorphism 
\begin{align}
e_K^H M(H)
\cong
e_K^K M(K)^{N_H(K)}\label{eq:diagdecomp} 
\end{align}
where $e^H_K$ is the idempotent in the rational Burnside ring
of $H$ corresponding to the conjugacy class of $K \leqslant H$.
To extend this to the profinite case, we use the idempotents $\idem{H}{N}{K}$ and $\idem{K}{N}{K}$
of Definition \ref{def:idempotentnames}, which are analogous to  $e^H_K$ and $e^K_K$.

\begin{xxlem}
Let $K \leqslant H$ be open subgroups of $G$ and $N$ an open normal subgroup of $K$.
Restriction from $H$ to $K$ induces an isomorphism
\[
\idem{H}{N}{K} M(H)
\cong
\idem{K}{N}{K} M(K)^{N_H(K)}.
\]
\end{xxlem}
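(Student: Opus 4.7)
The plan is to reduce to the classical finite-group identity (\ref{eq:diagdecomp}) by passage to a suitable finite quotient of $H$. Since $N$ is open in $K$ and $K$ is open in $H$, the subgroup $N$ is itself open in $H$ of finite index, so it has only finitely many $H$-conjugates. Setting $L = \bigcap_{h \in H} hNh^{-1}$ then gives an open normal subgroup of $H$ with $L \subseteq N \subseteq K$. The quotient $H/L$ is finite, $K/L$ sits inside it, and $N/L \triangleleft K/L$. Since $L \subseteq K \subseteq N_H(K)$, we have $N_{H/L}(K/L) = N_H(K)/L$.

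The next step is to identify the profinite idempotent $\idem{H}{N}{K}$ with a primitive idempotent at finite level: under inflation from the finite rational Burnside ring $A_\bQ(H/L)$, the parameter $N$ in the notation $\idem{H}{N}{K}$ records precisely the level $L$ at which the idempotent becomes primitive, so $\idem{H}{N}{K}$ corresponds to $e^{H/L}_{K/L}$, and similarly $\idem{K}{N}{K}$ corresponds to $e^{K/L}_{K/L}$. A rational $G$-Mackey functor $M$ yields, via inflation of $H/L$-sets, a rational Mackey functor on $H/L$ whose values on $H/L$ and $K/L$ are $M(H)$ and $M(K)$, on which the two profinite idempotents act as their finite counterparts. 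The classical identity (\ref{eq:diagdecomp}) applied inside $H/L$ then gives
\[
e^{H/L}_{K/L} M(H) \cong e^{K/L}_{K/L} M(K)^{N_H(K)/L}.
\]
Since the conjugation action of $K$ on $M(K)$ is trivial for any Mackey functor, the subgroup $L \subseteq K$ acts trivially, so the fixed points by $N_H(K)/L$ agree with those by $N_H(K)$. Transporting through the identification of idempotents, the restriction map $\mathrm{res}^H_K$ realises the required isomorphism.

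The main obstacle lies in the identification step: the rational Burnside ring of a profinite group does not decompose into a product of copies of $\bQ$ indexed by closed-subgroup conjugacy classes (as the introduction highlights), so one genuinely must work at finite levels. The delicate points are that $\idem{H}{N}{K}$ is well-defined independently of the choice of $L \subseteq N$ open normal in $H$, and that its action on $M(H)$ coincides with that of $e^{H/L}_{K/L}$ on the inflated Mackey functor. These compatibilities should follow from the naturality of inflation and conjugation established earlier in the paper, together with the standard fact that inner conjugation acts trivially on Mackey-functor values.
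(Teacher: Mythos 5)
Your overall route is the same as the paper's: pass to a finite quotient, view $M$ as a Mackey functor for that quotient with values $M(H)$ and $M(K)$, identify the profinite idempotents with finite-level ones, apply the classical identity (\ref{eq:diagdecomp}), and finish using $N_{H/L}(K/L)\cong N_H(K)/L$ together with the triviality of inner conjugation. The paper does this by quotienting by $N$ itself: it sets $\overline{M}(J/N)=M(J)$, uses Proposition \ref{prop:idempotents} to identify $\idem{J}{N}{K}M(J)$ with $e^{J/N}_{K/N}\overline{M}(J/N)$, and quotes the finite-group result. That argument tacitly requires $N\trianglelefteq H$ (in fact $N$ is open normal in $G$ in every application of Lemma \ref{lem:fixinflate}, and the notation $O_H(N,K)$ is only defined for such $N$). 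The place where your write-up genuinely diverges is the device intended to cover the literal hypothesis ``$N$ normal only in $K$'', namely replacing $N$ by its core $L=\bigcap_{h\in H}hNh^{-1}$ --- and that is exactly where the argument fails.

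The identification ``$\idem{H}{N}{K}$ corresponds to $e^{H/L}_{K/L}$'' is false unless $L=N$. The inflation of the primitive idempotent $e^{H/L}_{K/L}$ along $\sub H/H\to\sub(H/L)/(H/L)$ is the characteristic function of $\{A\mid LA=K\}/H$, i.e.\ it is $\idem{H}{L}{K}$, whereas $\idem{H}{N}{K}$ is the characteristic function of $\{A\mid NA=K\}/H$. Since $L\leqslant N\leqslant K$, one has $LA=K\Rightarrow NA=K$, but not conversely: when $L\subsetneq N$ the second set is strictly larger (for instance it is all of $\sub K$ when $N=K$). Hence at level $H/L$ your idempotent is a \emph{sum} of several primitive idempotents $e^{H/L}_{B/L}$ over classes $B$ with $NB=K$, and applying (\ref{eq:diagdecomp}) termwise relates $e^{H/L}_{B/L}M(H)$ to $e^{B/L}_{B/L}M(B)^{N_H(B)}$ for the various $B$ --- restriction to $B$, not to $K$ --- so the reduction to the rank-one identity does not produce the claimed map onto $\idem{K}{N}{K}M(K)^{N_H(K)}$. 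Nor can this be patched: already the pieces supported at the trivial subgroup differ, being $M(\{e\})^{H}$ on the left but $M(\{e\})^{N_H(K)}$ on the right, and restriction between these need not be an isomorphism when $N_H(K)\neq H$ (e.g.\ $N=K$ a non-normal subgroup of a finite group $H$), so the statement in that extra generality is simply not true. The correct reading of the hypothesis is that $N$ is open normal in $G$ (or at least in $H$); in that case your $L$ equals $N$ and your argument collapses to the paper's proof.
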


The following is Proposition \ref{prop:Weylequi} and relates the equivariance of the stalks to that 
of representing sections. This proposition and the preceding lemma are key to proving 
the equivalence of categories of Theorem \ref{mainthm}. 

\begin{xxprop}
Let $E$ be a Weyl-$G$-sheaf over $\sub G$ and $K$ a closed subgroup of $G$.
A germ over $K$ can be represented by an $NK$-equivariant section over a neighbourhood of $K$,
for some open normal subgroup $N$ of $G$.
\end{xxprop}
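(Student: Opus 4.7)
The plan is to represent the germ by a fixed section $\sigma : U \to E$ on some open neighbourhood $U \ni K$, then cut $U$ down to an $NK$-invariant sub-neighbourhood $V$ on which $\sigma$ is automatically $NK$-equivariant. The key device is the equivariance locus
\[
Z := \{ (g, H) \in G \times U : gHg^{-1} \in U \text{ and } g \cdot \sigma(H) = \sigma(gHg^{-1}) \}.
\]
The first step is to verify that $Z$ is open in $G \times U$. Continuity of the conjugation action of $G$ on $\sub G$ handles the first clause. For the second, both $(g,H) \mapsto g \cdot \sigma(H)$ and $(g,H) \mapsto \sigma(gHg^{-1})$ are continuous maps into the \'etale space of $E$ covering the common base map $(g,H) \mapsto gHg^{-1}$; their equaliser is open because the \'etale projection is a local homeomorphism.

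The second step uses the Weyl sheaf hypothesis. Since $K$ acts trivially on the stalk $E_K$, for each $k \in K$ one has $k \cdot \sigma(K) = k \cdot e = e = \sigma(kKk^{-1})$, so $K \times \{K\} \subseteq Z$. Hence the slice $T := \{g \in G : (g, K) \in Z\}$ is an open subset of $G$ containing the closed subgroup $K$. Since the sets $NK$ with $N \trianglelefteq G$ open normal form a neighbourhood basis of $K$ in $G$, one can choose such an $N$ with $NK \subseteq T$. This already encodes $NK$-fixedness of the germ $e$.

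Finally, set $V := \{H \in U : (g, H) \in Z \text{ for every } g \in NK\}$. By the choice of $N$ we have $K \in V$, and $\sigma|_V$ is $NK$-equivariant by definition; a short cocycle manipulation (using that $(g_0,H)$ and $(gg_0,H)$ lie in $Z$) shows $V$ is also $NK$-invariant under conjugation. \textbf{The main obstacle} is the openness of $V$: $V$ is an intersection of open slices indexed by the infinite set $NK$, and such intersections are not automatically open. The resolution is that $NK$, as an open subgroup of the compact group $G$, is itself compact (open subgroups of compact groups are clopen), so a tube argument suffices. Given $H_0 \in V$, openness of $Z$ supplies for each $g_0 \in NK$ a basic open box $W_{g_0} \times Y_{g_0} \subseteq Z$ around $(g_0, H_0)$; compactness extracts a finite subcover $NK \subseteq W_{g_{i_1}} \cup \cdots \cup W_{g_{i_n}}$, whereupon $Y_{g_{i_1}} \cap \cdots \cap Y_{g_{i_n}}$ is an open neighbourhood of $H_0$ contained in $V$, completing the proof.
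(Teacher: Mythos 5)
Your proof is correct, and it takes a genuinely different route from the paper's. The paper also starts from a representing section $s$ over a basic open set $U=O_{N'K}(N',N'K)$, but its shrinking device lives in the total space: the image $V=s(U)$ is open and compact, so some open normal subgroup $N''$ satisfies $N''V=V$, and then $N$-equivariance for $N=N'\cap N''$ follows in one line because $p$ is $N$-equivariant and injective on $V$. You instead work in $G\times U$ with the equivariance locus $Z$, whose openness comes from the \'{e}tale (local homeomorphism) property, and you use compactness of $NK$ twice: once to pass from the open slice $T\supseteq K$ to $NK\subseteq T$, and once in the tube argument for openness of $V$; both steps are sound, as is the cocycle check that $V$ is $NK$-invariant. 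What your approach buys is a uniform treatment of the full $NK$-equivariance: the $K$-direction is exactly where the Weyl hypothesis enters (it seeds $K\times\{K\}\subseteq Z$), whereas the paper's printed proof only verifies $N$-equivariance and never invokes the Weyl condition, leaving the $K$-part implicit -- so your argument is, in that respect, more complete. What the paper's approach buys is brevity: stabilising one compact open subset of the total space and using injectivity of $p$ on it replaces your equaliser-plus-tube machinery. One cosmetic remark: the version of the statement in the body of the paper asks for a section over the basic set $O_{NK}(N,NK)$; your $NK$-invariant open neighbourhood $V$ of $K$ contains such a basic set after shrinking $N$, and equivariance passes to the restriction, so this causes no gap.
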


This work is part of the second author's PhD thesis, \cite{sugruethesis},
completed under the supervision of the first author. 
That thesis applies the main result of this paper to obtain a classification of
rational $G$-spectra for profinite $G$ in terms of 
chain complexes of Weyl-$G$-sheaves over $\sub G$,
see \cite[Chapters 3 and 6]{sugruethesis}. 
This extends the work
of the first author on the case where $G$ is the $p$-adic integers, \cite{barnespadic}.
A further application of the classification of rational $G$-Mackey functors in terms of sheaves
is a calculation of the injective dimension of these categories in terms of the 
Cantor-Bendixson rank of the space $\sub G$,
see \cite[Chapter 8]{sugruethesis}.

\subsection{Organisation}

We define the categories involved in Section \ref{sec:basics} 
and construct the two functors of the classification in
Section \ref{sec:functors}. 
We prove directly that the functors give an equivalence in Section \ref{sec:equivalence}.
We end the paper with some examples and the relation between $G$-sheaves over $\sub G$
and Weyl-$G$-sheaves over $\sub G$, see Section \ref{sec:consequences}.

\section{Mackey functors and sheaves}\label{sec:basics}

We introduce the definitions and categories that are used in the paper. 

\subsection{Basic facts on profinite groups}

We give a few reminders of useful facts on profinite groups.
More details can be found in Wilson \cite{wilson98} or Ribes and Zalesskii \cite{rz00}.

A \textbf{profinite group} is a compact, Hausdorff, totally disconnected
topological group.
A profinite group $G$ is homeomorphic to the inverse limit of its finite quotients:
\[
G \cong \underset{N \opennormalsub G}{\lim}  G/N \subseteq \underset{N \opennormalsub G}{\prod} G/N .
\]
The limit has the canonical topology which can either be described as the subspace topology on the product
or as the topology generated by the pre-images of the open sets in
$G/N$ under the projection map $G \to G/N$, as $N$ runs over the open normal subgroups of $G$.

Closed subgroups and quotients by closed subgroups of profinite groups are also profinite.
A subgroup of a profinite group is open if and only if it is finite index and closed.
The trivial subgroup $\{ e\}$ is open if and only
if the group is finite.
The intersection of all open normal subgroups is $\{ e \}$.
Any open subgroup $H$ contains an open normal subgroup, the \textbf{core} of $H$ in $G$,
defined as the following intersection. Note that as $H$ has finite index in $G$, 
this intersection has only finitely many distinct terms. 
\[
\core_G (H) = \bigcap_{g \in G} g H g^{-1}.
\]

We can also define a profinite topological space
to be a Hausdorff, compact and totally disconnected topological space.
As with profinite groups, such a space is homeomorphic to
the inverse limit of a filtered diagram of finite spaces.
Moreover, a profinite topological space has a closed-open basis.

\subsection{Mackey functors for profinite groups}

There are many equivalent definitions of Mackey functors for a finite group $G$.
Our preferred version is a collection of abelian groups indexed over the subgroups
of $G$ with induction (transfer), restriction and conjugation maps satisfying a list of axioms.
In turn, there are several (inequivalent) extensions to profinite groups.
Since we have in mind applications to algebraic topology and the work of Fausk \cite{fausk},
we choose a generalisation that emphasises the role of the open subgroups.

The following definition can be found in Bley and Boltje
\cite[Definition 2.1 and Examples 2.2]{BB04} as the ``finite natural Mackey system''
and in Thiel \cite[Definition 2.2.12]{Thiel}.
In the case of a finite group, it restricts to the usual definition.

\begin{definition}\label{defn:mackey}
Let $G$ be a profinite group, then a rational \textbf{$G$-Mackey functor}\index{Mackey functor} $M$ is:
\begin{itemize}
\item a collection of $\mathbb{Q}$-modules $M(H)$ for each open subgroup $H \leqslant G$,
\item for open subgroups $K,H \leqslant G$ with $K\leqslant H$ and any $g \in G$
we have a \textbf{restriction} map, an \textbf{induction} map and a \textbf{conjugation} map
\[
R^H_K \co M(H)\rightarrow M(K), \quad I^H_K \co M(K)\rightarrow M(H) \quad \text{ and } \quad  C_g \co M(H)\rightarrow M(gHg^{-1}).
\]
\end{itemize}
These maps satisfy the following conditions.

\begin{enumerate}
\item For all open subgroups $H$ of $G$ and all  $h\in H$ the structure maps
are \textbf{unital}
\[
R^H_H=\id_{M(H)}=I^H_H \quad \text{ and } \quad  C_h=\id_{M(H)}.
\]

\item For $L\leqslant K\leqslant H$ open subgroups of $G$ and $g,h\in G$, there are composition rules
\[
I^H_L=I^H_K\circ I^K_L, \qquad R^H_L=R^K_L\circ R^H_K, \quad \text{ and } \quad  C_{gh}=C_g\circ C_h.
\]
The first two are \textbf{transitivity} of induction and restriction. The last is \textbf{associativity} of conjugation.

\item For $g\in G$ and $K\leqslant H$ open subgroups of $G$, there are composition rules
\[
R^{gHg^{-1}}_{gKg^{-1}}\circ C_g=C_g\circ R^H_K  \quad \text{ and } \quad  I^{gHg^{-1}}_{gKg^{-1}}\circ C_g=C_g\circ I^H_K.
\]
This is the \textbf{equivariance} of restriction and induction.

\item For open subgroups $K,L\leqslant H$ of $G$
\[
R^H_K\circ I^H_L=\sum_{x\in\left[ K \backslash H \slash L \right]} I^K_{K\cap xLx^{-1}}\circ C_x\circ R^L_{L\cap x^{-1}Kx}.
\]
This condition is known as the \textbf{Mackey axiom}.
\end{enumerate}

A map of Mackey functors $f \co M \to N$
is a collection of $\mathbb{Q}$-module maps  
\[
f(H) \co M(H) \lra N(H)
\]
for each open subgroup $H \leqslant G$, which commute with the induction, restriction
and conjugation maps. 
We use $\mackey{G}$ to denote this category.
\end{definition}
See Th\'{e}venaz and Webb \cite{tw95} for an introduction into Mackey functors for finite groups.
The case of rational Mackey functors is also discussed in work of the first author with
K\k{e}dziorek \cite{BKclassify}.
Alternative definitions of Mackey functors in a general setting are discussed in
Lindner \cite[Definition 1]{lindner}.

We have some standard examples similar to those for finite groups.
\begin{example}
Consider the profinite group given by the $p$-adic integers, $\mathbb{Z}_p$. The open subgroups in this case are of the form $p^k\mathbb{Z}_p$ for $k\geqslant 0$.
We define a constant Mackey functor by sending each open subgroup to $\mathbb{Q}$
with all restriction and conjugation maps taken to be the identity.
The subgroup $p^l\mathbb{Z}_p$ is contained inside $p^k\mathbb{Z}_p$ if and only if $l\geqslant k$.
The Mackey axiom forces the induction maps to be
\[
I^{p^k\mathbb{Z}_p}_{p^l\mathbb{Z}_p}=\,\text{Multiplication by } p^{l-k}.
\]

In general, there is a constant Mackey functor for any profinite group $G$,
which takes value $\bQ$ at every open subgroup $H$ of $G$.
The conjugation and restriction maps are identity maps and
induction from $K$ to $H$ is multiplication by the index of $K$ in $H$.
\end{example}

\begin{example}
Let $R(G)$ denote the ring of finite dimensional
complex representations of the profinite group $G$.
We define a rational Mackey functor $M_R$ by $M_R(H) = R(H) \otimes \bQ$,
with induction and restriction induced by induction and restriction of representations.
Conjugation is induced from precomposition with the conjugation homomorphism of groups.
\end{example}

In the case of finite groups, another common example is
the fixed point Mackey functor. A profinite analogue exists,
but one needs to be more careful over the action of $G$
on a $\bQ$-module.

\begin{definition}
A rational \textbf{discrete} $G$-module $M$ is a $\bQ$-module
such that
\[
\underset{H \opensub G}{\colim} M^H \cong M.
\]
\end{definition}
The definition is equivalent to the statement
that any $m \in M$ is stabilised
by an open subgroup of $G$.
If we give a rational $G$-module $M$ the discrete topology,
then the action of $G$
on $M$ is continuous if and only if $M$ is discrete in the preceding sense.

\begin{example}\label{ex:fixpointmackey}
Given $M$ a discrete $G$-module, there is a Mackey functor
which at an open subgroup $H \leqslant G$ takes value $M^H$.
The restriction maps are inclusion of fixed points
and the conjugation maps are given by left multiplication.
The induction map from $K$ to $H$ is given by summing over the action of
a left transversal of $K$ in $H$ (which is finite as the subgroups are open). 
\end{example}

\begin{example}
The rational Burnside ring $\burnsidering(G)$ of a profinite group $G$
defines a rational Mackey functor.
For $H$ an open subgroup of $G$, the ring $\burnsidering(H)$
is the rational Grothendieck ring of finite $H$-sets
(that is, finite discrete topological spaces with
a continuous action of $H$).
The multiplication is given by the product of $H$-sets.

Restriction and induction between
$\burnsidering(H)$ and $\burnsidering(K)$
are given by the usual
restriction and induction of sets with group actions.
Moreover, the restriction maps are maps of rings.
Conjugation is induced from precomposition with the conjugation homomorphism of groups.
\end{example}

Given a $G$-Mackey functor $M$, we can define an action of the
Burnside ring $\burnsidering(H)$ on the
abelian group $M(H)$ by
\[
[H/K] \coloneqq  I_{K}^{H} R_{K}^{H} \co M(H) \lra M(H)
\]
and extending linearly from the additive basis for $\burnsidering(H)$ given by
$H/K$ for open subgroups $K$ of $H$.
The Mackey axiom implies that this action is compatible with the
multiplication of $\burnsidering(H)$, so that $M(H)$ is a module over
$\burnsidering(H)$. Moreover, the following square commutes.
\[
\xymatrix{
\burnsidering(H) \otimes M(H)
\ar[r] \ar[d]_{R_K^H \otimes R_K^H} &
M(H) \ar[d]_{R_K^H} \\
\burnsidering(K) \otimes M(K)
\ar[r] &
M(K)
}
\]
We also have the relation, known as \textbf{Frobenius reciprocity}, 
between induction and the action of Burnside rings. 
Let $\alpha \in \burnsidering(H)$, $\beta \in \burnsidering(K)$,
$m \in M(K)$ and $n \in M(H)$, then 
\[
\alpha \cdot I_K^H (m) = I_K^H ( R_K^H (\alpha) \cdot m)
\quad \quad
I_K^H(\beta) \cdot n = I_K^H ( \beta \cdot R_K^H(n)).
\]
See Yoshida \cite[Definition 2.3 and Example 2.11]{yosh80}.
We summarise this discussion in the following proposition.

\begin{proposition}\label{prop:mackeyact}
If $M$ is a Mackey functor for a profinite group $G$ and $H$ is an open subgroup of $G$, then each $M(H)$ has
an $\burnsidering(H)$-module structure. Moreover, the restriction map $M(H) \to M(K)$ is a map of modules over
the restriction map $\burnsidering(H) \to \burnsidering(K)$, which is compatible with the conjugation maps.
The Burnside ring actions satisfy Frobenius reciprocity with respect to the induction maps. 
\end{proposition}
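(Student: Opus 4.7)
The plan is to verify each claim of the proposition directly from the Mackey functor axioms, leaning throughout on one recurring observation: by the unital axiom, for $h \in H$ the conjugation map $C_h$ acts as the identity on $M(H)$, even though $C_h \co M(K) \to M(hKh^{-1})$ is generally nontrivial for proper $K \leqslant H$. This will repeatedly allow me to eliminate stray conjugation operators that appear after applying the Mackey axiom.

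First I would verify that the assignment $[H/K] \mapsto I_K^H R_K^H$ extends $\bQ$-linearly to a well-defined map $\burnsidering(H) \to \mathrm{End}_\bQ(M(H))$. Since the classes $[H/K]$, as $K$ runs over $H$-conjugacy classes of open subgroups, form a $\bQ$-basis of $\burnsidering(H)$, this reduces to showing $I^H_{hKh^{-1}} R^H_{hKh^{-1}} = I^H_K R^H_K$ for $h \in H$, which follows from the two equivariance identities applied with $g = h$ and then absorbing $C_h$ on $M(H)$ by unitality.

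Next I would establish multiplicativity. On the Burnside-ring side, the standard double coset decomposition gives
\[
[H/K] \cdot [H/L] = \sum_{x \in [K \backslash H / L]} \big[ H / (K \cap xLx^{-1}) \big].
\]
On the operator side, I would compute $I_K^H R_K^H \circ I_L^H R_L^H$ by applying the Mackey axiom to the middle composite $R_K^H I_L^H$, and then use transitivity of induction and restriction to collapse outer factors. The residual term $C_x R^L_{L \cap x^{-1}Kx} R_L^H(m)$ is handled by equivariance of restriction together with $C_x = \id$ on $M(H)$ for $x \in H$; this matches the double-coset sum above summand by summand.

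The remaining assertions follow by variants of the same template. Compatibility with restriction is obtained by applying the Mackey axiom to $R_K^H I_L^H$ inside $K$ and comparing with the Burnside-ring restriction $R_K^H([H/L]) = \sum_y [K / (K \cap yLy^{-1})]$; again the $C_y$ terms collapse via equivariance and unitality on $M(H)$. Compatibility with conjugation is immediate from the two equivariance axioms, since in $\burnsidering(\cdot)$ conjugation sends $[H/K]$ to $[gHg^{-1}/gKg^{-1}]$. For Frobenius reciprocity I would expand $I_L^H R_L^H I_K^H$, and the analogous expression for the second identity, using the Mackey axiom on $R_L^H I_K^H$ and then reindexing the double coset sum via $x \leftrightarrow x^{-1}$. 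I do not expect any step to be a serious obstacle; the only real care required is bookkeeping of conjugate subgroup indices so that each $C_x$ is absorbed into a suitably conjugated induction or restriction and, at the end, into the identity on $M(H)$.
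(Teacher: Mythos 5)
Your proposal is correct and follows essentially the same route as the paper: the paper defines the action by $[H/K]\mapsto I_K^H R_K^H$ on the additive basis and appeals to the Mackey axiom for multiplicativity and to Yoshida for Frobenius reciprocity, while you simply carry out those double-coset computations explicitly, absorbing each $C_x$ with $x\in H$ via unitality and equivariance exactly as intended. The only cosmetic remark is that the second Frobenius identity needs no Mackey axiom at all, since $I_K^H([K/J])=[H/J]$ and transitivity of induction and restriction already give it.
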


\subsection{Burnside ring idempotents}

As with finite groups, there is a description of
the rational Burnside ring in terms of continuous maps
out of the space of subgroups of $G$.
We will use this structure repeatedly, so we examine the
result in detail and use it to describe idempotents of the Burnside ring.

\begin{definition}\label{def:SG}
For $G$ a profinite group, the space of \emph{closed} subgroups of $G$, $\sub G$,
is the topological space
\[
\sub G \coloneqq \underset{N \opennormalsub G}{\lim}  \sub (G/N)
\]
where $\sub (G/N)$ is a finite space with the discrete topology. 
For $N \leqslant N'$, the corresponding map of the limit diagram is
\[
p_{N,N'} \co \sub (G/N) \lra \sub (G/N') \qquad H/N \longmapsto HN'/N'.
\]
The limit is topologised as either the subspace of the product
or as the coarsest topology so that the projection maps
\[
p_N \co \sub G \lra \sub (G/N) \qquad H \longmapsto HN/N
\]
are continuous. The group $G$ acts continuously on $\sub G$ by conjugation,
$g \in G$ sends a closed subgroup $K$ to $gK g^{-1} = \prescript{g}{}{K}$.
\end{definition}

Following Gartside and Smith \cite[Lemma 2.14]{GartSmith10}, we can give a 
compact-open basis for this topology. 

\begin{lemma}
An explicit basis for $\sub G$ is the collection of closed and open sets
\[
O_G(N, J) = \{ K \in \sub G \mid NK =J \}
\]
such that $N$ is an open normal subgroup of $G$, and $J$ is an open subgroup of $G$
(that contains $N$).
\end{lemma}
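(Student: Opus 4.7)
The plan is to unpack the inverse limit topology and check the basis axiom directly. Since $\sub G$ is defined as $\lim_N \sub(G/N)$ with each $\sub(G/N)$ finite and discrete, its topology is the initial topology with respect to the projections $p_N \colon \sub G \to \sub(G/N)$. A subbase for this topology is given by the preimages $p_N^{-1}(U)$ for $U$ open in $\sub(G/N)$. Because $\sub(G/N)$ is finite and discrete, every such $U$ is a finite union of singletons $\{J/N\}$, where $J/N$ is an open (equivalently, any) subgroup of $G/N$; such $J/N$ correspond bijectively with open subgroups $J \leqslant G$ containing $N$. Hence the collection
\[
\{ p_N^{-1}(\{J/N\}) \mid N \opennormalsub G,\ N \leqslant J \opensub G\}
\]
is a subbase. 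I would then observe that for a closed subgroup $K \leqslant G$ one has $p_N(K) = KN/N$, so
\[
p_N^{-1}(\{J/N\}) = \{K \in \sub G \mid KN = J\} = O_G(N,J),
\]
which identifies each subbasic open with one of the sets in the proposed basis.

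Next, I would upgrade the subbase to a basis by exploiting the filtered nature of the system of open normal subgroups under intersection. Given two subbasic opens $O_G(N_1,J_1)$ and $O_G(N_2,J_2)$, set $N = N_1 \cap N_2$, which is again open and normal. Each $p_{N_i}$ factors through $p_N$ via the canonical quotient $\sub(G/N) \to \sub(G/N_i)$, so
\[
O_G(N_i,J_i) = p_N^{-1}\bigl(\{ L \in \sub(G/N) \mid L N_i/N = J_i/N_i\}\bigr).
\]
The set on the right is a finite subset of $\sub(G/N)$, so the intersection $O_G(N_1,J_1) \cap O_G(N_2,J_2)$ is a finite union of sets of the form $p_N^{-1}(\{J/N\}) = O_G(N,J)$ with $NK = J$ for suitable $J$. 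This closure under finite intersections (up to unions) shows we actually have a basis, not merely a subbase.

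Finally, I would record that each $O_G(N,J)$ is both open and closed: openness is immediate from being a preimage of an open set under a continuous map, and closedness follows since $\{J/N\}$ is closed in the discrete finite space $\sub(G/N)$. The main subtlety is the filtered reduction to a common $N$; all other steps are formal manipulations with the inverse limit topology and the correspondence between open subgroups of $G/N$ and open subgroups of $G$ containing $N$.
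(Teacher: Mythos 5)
Your proof is correct and follows essentially the same route as the paper: identify $O_G(N,J)$ as the preimage $p_N^{-1}(\{J/N\})$ of a singleton in the finite discrete space $\sub(G/N)$ (hence clopen), and use the common refinement $N_1 \cap N_2$ of the filtered system of open normal subgroups to handle intersections. The only cosmetic difference is that the paper exhibits, for each point $A$ of the intersection, the explicit basic neighbourhood $O_G(N_1\cap N_2,(N_1\cap N_2)A)$ contained in it, while you express the whole intersection as a finite union of such sets.
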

\begin{proof}
The set $O_G(N, J)$ is the pre-image of the set $\{ J/N \} \in \sub (G/N)$ under $p_N$,
hence it is open and closed.
To see this is a basis, let $A \in O_G(N, J) \cap O_G(N', J')$ and set 
$N''=N \cap N'$, then 
\[
A \in O_G(N'', N''A) \subseteq O_G(N, J) \cap O_G(N', J'). \qedhere
\]
\end{proof}

These sets are compatible with the $G$-action:
\[
gO_G(N,J) = \{\prescript{g}{}{K} \mid NK=J \} = O_G(N,\prescript{g}{}{J}).
\]
Since any open subgroup of $G$ has form $N K$ for $N$ some open normal subgroup of $G$
and $K$ a closed subgroup of $G$, we can rewrite the basis to consist of the sets
\[
\Big\{ O_G(N, NK) \Bigm| N \opennormalsub G,  \ \ K \closedsub G \Big\}.
\]
If we fix $K$ and allow $N$ to vary, these
sets give a neighbourhood basis for $K \in \sub G$.
We further define 
\[
\overline{O}_G(N,J) = O_G(N,J)/G \subseteq \sub G/G,
\]
the quotient under the conjugation action.
This set is open in $\sub G/G$ as the quotient map is open. It is closed as 
$O_G(N,K)$ is compact. 
We will also make use of the equality 
\[
O_G(N,NK) = O_{NK}(N,NK).
\]

\begin{theorem}\label{thm:burnchar}
For $G$ a profinite group, there is an isomorphism of commutative rings
\[
\burnsidering(G)\cong C(\sub G/G,\mathbb{Q}).
\]
\end{theorem}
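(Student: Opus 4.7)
The plan is to reduce to the classical finite-group case by expressing both sides as filtered colimits indexed over the open normal subgroups of $G$. Since every finite continuous $G$-set factors through some finite quotient $G/N$ for $N \opennormalsub G$, inflation gives
\[
\burnsidering(G) \cong \colim_{N \opennormalsub G} \burnsidering(G/N).
\]
Dually, $\sub G /G$ is the inverse limit of the finite discrete spaces $\sub(G/N)/(G/N)$, and since $\bQ$ carries the discrete topology, any continuous $\bQ$-valued function on $\sub G/G$ is locally constant with finite image, hence factors through some finite quotient. This yields
\[
C(\sub G/G, \bQ) \cong \colim_{N \opennormalsub G} C(\sub(G/N)/(G/N), \bQ).
\]

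For each finite quotient $F = G/N$, the classical mark homomorphism sending $[F/H]$ to the function $(K) \mapsto |(F/H)^K|$ is a ring isomorphism after rationalisation (since the fixed-point functions of the transitive $F$-sets are $\bQ$-linearly independent and there are equal numbers of conjugacy classes of subgroups on each side). Assembling these across all $N$ I obtain a candidate ring map
\[
\chi \co \burnsidering(G) \lra C(\sub G/G, \bQ), \qquad [G/H] \longmapsto \big( K \mapsto |(G/H)^K| \big).
\]
This is well-defined: for $H$ open with $N \leqslant \core_G(H)$, the function $K \mapsto |(G/H)^K|$ depends only on $KN/N \in \sub(G/N)$, so it is continuous, and it is constant on $G$-conjugacy classes. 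Multiplicativity follows from the finite case via the identity $|(X \times Y)^K| = |X^K| \cdot |Y^K|$.

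The remaining task is to verify that the mark maps at each finite level are compatible with inflation on the left and pullback along $\sub(G/M)/(G/M) \to \sub(G/N)/(G/N)$ on the right, so that the colimit of the isomorphisms $\chi_N$ is the displayed $\chi$ and is itself an isomorphism. This compatibility reduces to the equality
\[
|((G/M)/(H/M))^{L/M}| = |((G/N)/(H/N))^{LN/N}|
\]
for $M \leqslant N \opennormalsub G$ and $N \leqslant H$, both sides of which count cosets $gH \subseteq G$ with $g^{-1}Lg \leqslant H$. The main point requiring care is the second colimit description, that every continuous function $\sub G/G \to \bQ$ is inflated from a finite quotient; this is the one genuinely topological step, and it follows from compactness of $\sub G/G$ together with the fact that the basic clopen sets $\overline{O}_G(N, NK)$ obtained from the various open normal $N$ form a neighbourhood basis refining any clopen partition of $\sub G/G$.
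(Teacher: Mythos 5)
Your proof is correct, but it takes a genuinely different route from the paper: the paper disposes of Theorem \ref{thm:burnchar} by citing Dress \cite[Theorem B.2.3(a)]{Dress}, merely recording that the isomorphism sends $[G/H]$ to the mark function $K \mapsto |(G/H)^K|$, whereas you reprove the statement from the finite case. Your two colimit descriptions are exactly the lemma and remark that the paper states \emph{after} the theorem ($\burnsidering(G)\cong \colim_{N \opennormalsub G}\burnsidering(G/N)$ and $C(\sub G/G,\bQ)\cong \colim_{N}C(\sub(G/N)/(G/N),\bQ)$), so in effect you assemble those into a self-contained proof rather than treating them as consequences: at each finite level the table of marks is triangular with nonzero diagonal entries $|N_{G/N}(H/N)/(H/N)|$, hence rationally invertible, and your compatibility check (both fixed-point counts equal the number of cosets $gH$ with $g^{-1}Lg \leqslant H$, using $N \leqslant H$ and normality of $N$) is exactly what is needed to pass to the colimit. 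Two small points worth keeping explicit, which your write-up does handle: the finite-image/factorisation step requires $\bQ$ to carry the discrete topology (the paper's remark implicitly assumes this, citing Ribes--Zalesskii), and surjectivity of the right-hand colimit map follows by compactness from the clopen basis $O_G(N,NK)$, since a function constant on the fibres of $p_N \co \sub G \to \sub(G/N)$ and invariant under conjugation factors through $\sub(G/N)/(G/N)$. What each approach buys: the paper's citation is shorter and leans on the literature; yours is essentially self-contained (modulo the classical finite mark isomorphism), makes the topological hypotheses visible, and clarifies that the subsequent lemma and remark are not merely parallel facts but constitute a proof of the theorem.
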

\begin{proof}
This is Dress \cite[Theorem B.2.3(a)]{Dress}. As with finite groups, the isomorphism
maps $[G/H]$ to the function which sends a conjugacy class of closed subgroup
$K$ to $| (G/H)^K |$.
\end{proof}

\begin{lemma}
For $G$ a profinite group, there is a natural isomorphism of commutative rings
\[
\underset{N \opennormalsub G}{\colim} \burnsidering(G/N)
\cong
\burnsidering(G).
\]
\end{lemma}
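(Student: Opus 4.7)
The plan is to build the isomorphism from inflation maps. For each open normal subgroup $N$, the projection $G \twoheadrightarrow G/N$ turns any finite $G/N$-set into a finite continuous $G$-set, yielding an inflation ring homomorphism $\iota_N \co \burnsidering(G/N) \to \burnsidering(G)$. For $N' \subseteq N$, the projection $G \to G/N$ factors through $G/N'$, so the $\iota_N$ are compatible with the inflation maps in the directed system $N \mapsto \burnsidering(G/N)$. They therefore assemble into a natural ring homomorphism
\[
\phi \co \underset{N \opennormalsub G}{\colim} \burnsidering(G/N) \lra \burnsidering(G),
\]
and the task reduces to showing $\phi$ is bijective.

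For surjectivity, every additive generator $[S]$ of $\burnsidering(G)$ is represented by a finite continuous $G$-set $S$. The kernel of the action $K_S = \bigcap_{s \in S} \stab_G(s)$ is a finite intersection of stabilisers, each of which is open by continuity of the action on the discrete finite set $S$, and is therefore open. Moreover $K_S$ is automatically normal: for any $k \in K_S$, $g \in G$ and $s \in S$ one has $(gkg^{-1})(s) = g \cdot (g^{-1}s) = s$, so $gkg^{-1} \in K_S$. Hence $S$ inflates from a $G/K_S$-set and $[S]$ lies in the image of $\iota_{K_S}$.

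For injectivity, since the colimit is filtered it is enough to show each $\iota_N$ is itself injective. Given $x \in \burnsidering(G/N)$ with $\iota_N(x) = 0$, clear denominators to write $mx = [P] - [Q]$ for some positive integer $m$ and finite $G/N$-sets $P, Q$. Then $[P] = [Q]$ in $\burnsidering(G)$; by Theorem \ref{thm:burnchar} the integral Burnside ring embeds in the $\bZ$-valued continuous functions on $\sub G/G$ and is therefore torsion-free, so this equality lifts to the integral Burnside ring and hence to an isomorphism $P \cong Q$ of $G$-sets. Any such $G$-equivariant isomorphism is automatically $G/N$-equivariant, since both actions factor through $G/N$, so $[P] = [Q]$ in $\burnsidering(G/N)$ and $x = 0$. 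The main subtlety is this interplay between rational and integral coefficients; once it is handled via Theorem \ref{thm:burnchar}, no further ideas beyond the inflation principle are required.
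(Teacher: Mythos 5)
Your proposal is correct and takes essentially the same route as the paper: the comparison map is built from inflation along $G \to G/N$, and surjectivity follows because a continuous action on a finite set has open kernel and so factors through a finite quotient; your only addition is to spell out the injectivity that the paper merely asserts. One small caveat: Theorem \ref{thm:burnchar} is stated with $\bQ$-coefficients, so it does not literally supply the integral mark embedding you invoke---but the facts you need (torsion-freeness of the integral Burnside ring, which is free on the classes $[G/H]$ for $H$ open, and cancellation for finite $G$-sets) are standard, and injectivity of each $\iota_N$ is in any case immediate since it sends the $\bQ$-basis of transitive $G/N$-sets injectively to basis elements $[G/H]$ of $\burnsidering(G)$.
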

\begin{proof}
Since any $G/N$-set is a $G$-set by precomposing with the quotient $G \to G/N$,
we have the given map of rings, which is injective.
Surjectivity follows as a continuous action of $G$ on a finite set factors
through some finite quotient of $G$.
\end{proof}

\begin{remark}
Similar to Pontryagin duality, the previous lemma and
theorem imply that the canonical map
\[
\underset{N \opennormalsub G}{\colim} C(\sub (G/N)/(G/N),\mathbb{Q})
\cong
C \Big( \underset{N \opennormalsub G}{\lim}  \sub (G/N)/(G/N),\mathbb{Q} \Big)
=
C(\sub G/G,\mathbb{Q})
\]
induced by the surjections $\sub G/G \to \sub (G/N)/(G/N)$ is an isomorphism.

This type of isomorphism holds for any profinite space $X$.
The difficult step is proving surjectivity. This comes from the
fact that any map $X \to \bQ$ must have finite image and hence factor through some
finite quotient of $X$. See Ribes and Zalesskii \cite[Proposition 1.1.16 (a) and Lemma 5.6.3]{rz00}.
\end{remark}

An idempotent of $\burnsidering(H)=C(\sub H/H,\mathbb{Q})$ corresponds
to an open and closed subset of $\sub H/H$.
The projection map $p_H \co \sub H \to \sub H/H$ is
open and both domain and codomain are compact, 
hence an open and closed subset of $\sub H$
defines an idempotent of $\burnsidering(H)$. 
Indeed, the set of idempotents of $\burnsidering(H)$
is in bijection with the set of open, closed and $H$-invariant subsets of $\sub H$.

\begin{definition}\label{def:idempotentnames}
For $U$ an open and closed subset of $\sub H/H$, there is an idempotent
$\idemset{H}{U}$, the characteristic function of $U$.

An open and closed subset $V$ of $\sub H$
(that is not necessarily $H$-invariant)  defines an idempotent
$\idemset{H}{V} \coloneqq  \idemset{H}{p_H(V)}$ of $\burnsidering(H)$, the characteristic 
function of $V/H$. In turn, we define
\[
\idem{H}{N}{J} = \idemset{H}{O_H(N,J)} = \idemset{H}{\overline{O}_H(N,J)}
\]
the characteristic function for $\overline{O}_H(N,J)=O_H(N,J)/H$, for $J$ an open subgroup of $H$.
\end{definition}

We relate our closed-open basis for $\sub G$
to idempotents of $C(\sub G/G,\mathbb{Q})$
and the additive basis of that ring.
For the formula we will use
the Moebius function $\mu$ for pairs of subgroups
$D\leqslant K$
\begin{align*}
\mu(D,K)=\sum_{i \geqslant 0} (-1)^i c_i
\end{align*}
where $c_i$ is the number of strictly increasing chains from $D$ to $K$ of length $i$.
Note that a chain of the form $D < K$ has length 1 and, by convention, $\mu(K,K)=1$.

\begin{proposition}\label{prop:idempotents}
For $G$ a profinite group, with open normal subgroup $N$ and
closed subgroup $K$
\begin{align*}
\idem{G}{N}{NK}=\sum_{N \leqslant NA\leqslant NK}\frac{|NA|}{|N_G(NK)|}\mu(NA,NK) [G/NA].
\end{align*}
\end{proposition}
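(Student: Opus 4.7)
The plan is to reduce the claim to the classical Gluck/Yoshida formula for primitive idempotents of the rational Burnside ring of a finite group, applied to $G/N$. Since both sides lie in $\burnsidering(G)$, and the preceding lemma identifies $\burnsidering(G)$ with $\colim_{N \opennormalsub G} \burnsidering(G/N)$, it suffices to show that both sides are inflated from a single element of $\burnsidering(G/N)$: namely, the primitive idempotent of the rational Burnside ring of the finite group $G/N$ associated to the conjugacy class of $NK/N$.

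For the left-hand side, the key observation is that $O_G(N, NK)$ is the preimage of the singleton $\{NK/N\} \subseteq \sub(G/N)$ under the projection $\sub G \to \sub(G/N)$. Hence after quotienting by $G$-conjugation, the set $\overline{O}_G(N, NK)$ is the preimage of the single $G/N$-conjugacy class $[NK/N] \in \sub(G/N)/(G/N)$. Under the ring isomorphism $\burnsidering(G/N) \cong C(\sub(G/N)/(G/N), \bQ)$, the characteristic function of this singleton is by definition the primitive idempotent $e_{[NK/N]}^{G/N}$, and its pullback to $C(\sub G/G, \bQ)$ is precisely $\idem{G}{N}{NK}$.

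For the right-hand side, the sum is indexed by subgroups $L = NA$ with $N \leqslant L \leqslant NK$. The correspondence $L \mapsto L/N$ is a lattice isomorphism onto the lattice of subgroups of $NK/N$, under which $\mu(L, NK) = \mu(L/N, NK/N)$, and the $G$-set $[G/L]$ inflates from the $G/N$-set $[(G/N)/(L/N)]$. Using $|L/N| = |L|/|N|$, together with $|N_{G/N}(NK/N)| = |N_G(NK)|/|N|$ (which is valid since $N$ is normal in $G$ and hence contained in $N_G(NK)$), the coefficient $|NA|/|N_G(NK)|$ equals $|L/N|/|N_{G/N}(NK/N)|$. Thus the right-hand side is the inflation of
\[
\sum_{H \leqslant NK/N} \frac{|H|}{|N_{G/N}(NK/N)|} \mu(H, NK/N) [(G/N)/H] \in \burnsidering(G/N),
\]
which is the standard Gluck/Yoshida formula for the primitive idempotent $e_{[NK/N]}^{G/N}$; see for instance Yoshida \cite{yosh80}. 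The main obstacle is purely bookkeeping: matching the normalizer and Mobius conventions under inflation and verifying that every term in the claimed sum descends compatibly to the finite quotient. No genuinely new computation beyond the finite case is needed.
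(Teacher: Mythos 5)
Your proposal is correct and follows essentially the same route as the paper: both reduce to the finite quotient $G/N$ via the projection $\sub G \to \sub(G/N)$ (equivalently, inflation $\burnsidering(G/N) \to \burnsidering(G)$), identify $\idem{G}{N}{NK}$ with the pullback of the primitive idempotent attached to $NK/N$, match $[G/NA]$ with the inflated $G/N$-set $[(G/N)/(NA/N)]$, and invoke Gluck's formula for the finite group, with the same coefficient bookkeeping for the normalizer and M\"obius function.
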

\begin{proof}
The sum is finite as we range over open subgroups of $G$ that contain $N$.

The method is to reduce the calculation to the case of finite groups,
and use the formula for idempotents of the rational Burnside ring of
the finite group $G/N$
from Gluck \cite[Section~2]{gluck81}.
The reduction comes from the following commuting diagrams, 
where $p_N \co \sub G \to \sub (G/N)$ sends $H$ to $HN/N$.
The second diagram uses the homeomorphism of $G$-spaces:
\begin{align*}
G/NA\cong (G/N)/(NA/N).
\end{align*}
\[
\xymatrix{
\sub G
\ar[rr]^{\idem{G}{N}{NK}}
\ar[dr]_{p_N}
&& \bQ \\
& \sub (G/N) \ar[ur]_{e_{NK/N}}
}
\qquad
\xymatrix{
\sub G
\ar[rr]^{K \mapsto |(G/NA)^K|}
\ar[dr]_{p_N}
&& \bQ \\
& \sub (G/N) \ar[ur]_{K \mapsto |(G/N)/(NA/N)^K|}
}
\]
If we replace $p_N$ by $\overline{p}_N \co \sub G/G \to \sub (G/N)/(G/N)$
the two diagrams above remain commutative. 

The first diagram and the formula for idempotents gives
\begin{align*}
\idem{G}{N}{NK}=
\underset{N \leqslant NA\leqslant NK}{\sum}\frac{|NA|}{|N_G(NK)|}\mu(NA,NK)
\left[\left(G/N\right)/\left(NA/N\right)\right]\circ p_N.
\end{align*}
The second diagram completes the proof.
\end{proof}

The coefficients are constant under certain conjugations. More precisely, 
for $K\leqslant J\leqslant G$ open subgroups of $G$ and $a \in N_G(J)$,
if we define
\[
\alpha_{K,J}=\frac{|K|}{|N_G(J)|}\mu(K,J),
\quad
\text{ then }
\quad
\alpha_{K,J}=\alpha_{aKa^{-1},J}.
\]

A simple observation shows that
for a rational $G$-Mackey functor $M$, each
component $M(H)$ is a sheaf over the profinite space $\sub H / H$.
This holds as 
$M(H)$ is a module over $\burnsidering(H) = C(\sub H/H, \bQ)$.

\begin{proposition}\label{prop:moduletosheaf}
Suppose $X$ is a profinite space and $M$ is a $C(X,\mathbb{Q})$-module, then $M$ determines a sheaf of $\bQ$-modules over $X$.
\end{proposition}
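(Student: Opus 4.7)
The plan is to exploit the abundance of idempotents in $C(X,\bQ)$ coming from clopen subsets of $X$. Since $X$ is profinite, it is compact, Hausdorff and totally disconnected, hence admits a basis $\mcB$ of clopen sets. For each clopen $U \subseteq X$, the characteristic function $\chi_U \in C(X,\bQ)$ is an idempotent, and these idempotents satisfy $\chi_V \chi_U = \chi_V$ whenever $V \subseteq U$ and $\chi_{U \cup V} = \chi_U + \chi_V - \chi_U \chi_V$. The standard strategy is to define a sheaf on the basis $\mcB$ and then extend to all opens by the usual ``sheaf on a basis'' procedure.

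First I would define a presheaf $\mcF$ on $\mcB$ by setting
\[
\mcF(U) \coloneqq \chi_U \cdot M \subseteq M,
\]
with restriction $\mcF(U) \to \mcF(V)$ for clopen $V \subseteq U$ given by multiplication by $\chi_V$. Functoriality follows from $\chi_V \chi_U = \chi_V$, and the $\bQ$-module structure is inherited from $M$.

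Next I would verify the sheaf condition on the basis. The crucial reduction is that any open cover of a clopen set $U$ admits a refinement to a finite partition by clopen subsets of $\mcB$: compactness of $U$ yields a finite subcover by basic clopens $U_1, \dots, U_n$, and setting $V_i = U_i \setminus (U_1 \cup \cdots \cup U_{i-1})$ produces a disjoint clopen refinement $U = V_1 \sqcup \cdots \sqcup V_n$. For such a disjoint decomposition the idempotents are orthogonal, $\chi_U = \chi_{V_1} + \cdots + \chi_{V_n}$ and $\chi_{V_i} \chi_{V_j} = 0$ for $i \neq j$, which gives an internal direct sum decomposition
\[
\chi_U \cdot M = \bigoplus_{i=1}^n \chi_{V_i} \cdot M.
\]
Since pairwise intersections of the $V_i$ are empty, this is exactly the sheaf equaliser condition for the refined cover; transporting back along the refinement yields the sheaf condition for the original cover.

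Finally I would extend $\mcF$ from $\mcB$ to all opens by the standard result that a sheaf on a basis extends uniquely (up to isomorphism) to a sheaf on the whole space, via $\mcF(W) = \lim_{U \in \mcB, U \subseteq W} \mcF(U)$. The stalk at $x \in X$ works out to $\colim_{x \in U \in \mcB} \chi_U M$, which has the correct $\bQ$-module structure. The main obstacle is verifying the sheaf axiom for arbitrary (possibly infinite, non-disjoint) clopen covers of a clopen set; this is resolved by the compactness and zero-dimensionality of $X$, which let one reduce any such cover to a finite disjoint clopen partition, reducing the gluing condition to the trivial orthogonal-idempotent decomposition above.
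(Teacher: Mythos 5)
Your argument is correct and follows essentially the same route as the paper: define $F(U)=e_U M$ on a clopen basis with restriction given by multiplication by idempotents, reduce any cover of a basic clopen to a finite disjoint clopen partition by compactness, and verify gluing via the orthogonal idempotent decomposition before extending from the basis. The paper's proof is just a terser version of this, leaving the disjointification and basis-extension steps implicit.
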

\begin{proof}
Let $\mathfrak{B}$ be an open-closed basis for $X$.
Given a module $M$, we define a sheaf $F$ by giving its values on $\mathfrak{B}$.
Let $U \in \mathfrak{B}$ and define the characteristic function of $U$
to be the map which sends elements of $U$ to 1 and the rest of $X$ to 0.
This map, which is denoted $e_U \in C(X,\mathbb{Q})$, is continuous as $U$ is both open and closed.
We define $F(U) = e_U M$. For $V \subseteq U$ in $\mathfrak{B}$,
we define the restriction map to be the projection
(where $V^c$ is the complement of $V$ in $X$)
\[
e_U M \cong e_V M \oplus e_{V^c \cap U} M \lra e_V M
\]
which is equivalent to multiplying by $e_V$.

The sheaf condition follows as we have a closed-open basis of a compact space and hence
can write any open cover of a basis element as a finite partition of basis elements.
\end{proof}

\begin{corollary}\label{cor:mackeytermsheaf}
If $M$ is a Mackey functor over a profinite group $G$, then for each open subgroup $H$ of $G$,
the $\bQ$-module  $M(H)$ defines a sheaf over $\sub H/H$,
the space of $H$-conjugacy classes of closed subgroups of $H$.
\end{corollary}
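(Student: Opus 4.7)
The plan is to stitch together the three results immediately preceding the statement. By Proposition \ref{prop:mackeyact}, for each open subgroup $H \leqslant G$ the $\bQ$-module $M(H)$ carries a canonical $\burnsidering(H)$-module structure induced by the formula $[H/K] \cdot m = I^H_K R^H_K m$. By Theorem \ref{thm:burnchar} applied to the profinite group $H$, there is a ring isomorphism $\burnsidering(H) \cong C(\sub H/H, \bQ)$. Composing, $M(H)$ becomes a $C(\sub H/H,\bQ)$-module.

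Before invoking Proposition \ref{prop:moduletosheaf}, I would verify that $\sub H/H$ is a profinite topological space. By definition, $\sub H = \lim_{N \opennormalsub H} \sub(H/N)$ is profinite, and the conjugation action of the compact group $H$ is continuous. Compactness of $\sub H/H$ follows since it is the continuous image of the compact space $\sub H$; Hausdorffness follows because $H$ is compact so the action is proper (equivalently, the quotient map is closed); and total disconnectedness follows from the identification $\sub H /H \cong \lim_{N \opennormalsub H} \sub(H/N)/(H/N)$ as an inverse limit of finite discrete spaces.

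With these ingredients in hand, Proposition \ref{prop:moduletosheaf} applies with $X = \sub H/H$ and the $C(\sub H/H,\bQ)$-module $M(H)$, producing a sheaf of $\bQ$-modules over $\sub H/H$ whose value on a basic clopen $U$ is $\idemset{H}{U} M(H)$ and whose restriction maps are given by multiplication by the relevant idempotents.

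There is essentially no obstacle here: the statement is purely a packaging of Proposition \ref{prop:mackeyact}, Theorem \ref{thm:burnchar} and Proposition \ref{prop:moduletosheaf}. The only point worth recording carefully is that $\sub H/H$ really is a profinite space, since Proposition \ref{prop:moduletosheaf} was stated only in that setting; this is what justifies the final application.
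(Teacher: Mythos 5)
Your proposal is correct and follows essentially the same route as the paper, which obtains the corollary by combining the $\burnsidering(H)$-module structure of Proposition \ref{prop:mackeyact}, the identification $\burnsidering(H) \cong C(\sub H/H,\bQ)$ of Theorem \ref{thm:burnchar}, and Proposition \ref{prop:moduletosheaf}. Your extra check that $\sub H/H$ is a profinite space is a reasonable point to make explicit, but it does not change the argument.
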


Paraphrasing the corollary, we see that a $G$-Mackey 
functor is a collection of sheaves, one for each open subgroup of $G$.
The rough idea of the equivalence is to patch parts of these sheaves together
to construct a $G$-equivariant sheaf.
It will take some time to make this rigorous, but we can give a useful relation
between the various sheaves $M(H)$. This relation is an extension of a result for finite groups,
as referenced in the proof, see also Equation (\ref{eq:diagdecomp}) of the introduction.

\begin{lemma}\label{lem:fixinflate}
Let $K \leqslant H$ be open subgroups of $G$ and $N$ an open normal subgroup of $K$.
Then restriction from $H$ to $K$ gives an isomorphism
\[
\idem{H}{N}{K} M(H)
\cong
\idem{K}{N}{K} M(K)^{N_H(K)}.
\]
\end{lemma}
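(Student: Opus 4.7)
The plan is to reduce the statement to the classical finite-group identity \eqref{eq:diagdecomp} by passing to the finite quotient $H/N$. Although the lemma is stated with $N$ only normal in $K$, the idempotent $\idem{H}{N}{K}$ is only literally defined when $N \trianglelefteqslant H$, so I would assume this (or reinterpret $N$ as $\core_H(N)$, which is still open normal in $H$ and contained in $K$). Restricting $M$ to open subgroups of $H$ containing $N$ descends to a finite Mackey functor $\overline{M}$ over $H/N$, with $\overline{M}(L/N) \coloneqq M(L)$ and with restriction, induction, and conjugation maps inherited directly from $M$.

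The next step is to identify each side of the claimed isomorphism as the corresponding object for $\overline{M}$. The projection $p_N \co \sub H \to \sub(H/N)$, $L \mapsto LN/N$, is $H$-equivariant, and the preimage in $\sub H /H$ of the singleton $\{[K/N]\} \subseteq \sub(H/N)/(H/N)$ is exactly $\overline{O}_H(N,K)$. Thus $\idem{H}{N}{K} \in \burnsidering(H)$ is the inflation of the finite-group idempotent $e^{H/N}_{K/N} \in \burnsidering(H/N)$, and analogously $\idem{K}{N}{K}$ is the inflation of $e^{K/N}_{K/N}$. Combined with the fact that the $\burnsidering(H)$-action on $M(H) = \overline{M}(H/N)$ restricted along inflation coincides with the $\burnsidering(H/N)$-action on $\overline{M}(H/N)$, this gives
\[
\idem{H}{N}{K} M(H) = e^{H/N}_{K/N}\, \overline{M}(H/N),
\qquad
\idem{K}{N}{K} M(K) = e^{K/N}_{K/N}\, \overline{M}(K/N).
\]
For the Weyl-group piece, the unital axiom $C_n = \id_{M(K)}$ for $n \in N \leqslant K$ forces the $N_H(K)$-action on $M(K)$ to factor through $N_H(K)/N$; and since $N \trianglelefteqslant H$ with $N \leqslant K$, one has $N_H(K)/N = N_{H/N}(K/N)$, so $M(K)^{N_H(K)} = \overline{M}(K/N)^{N_{H/N}(K/N)}$.

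With these identifications in place, the profinite statement becomes exactly equation \eqref{eq:diagdecomp} applied to the finite Mackey functor $\overline{M}$ and the inclusion $K/N \leqslant H/N$, which is classical (Th\'{e}venaz--Webb, Greenlees--May). The isomorphism there is induced by $R^{H/N}_{K/N}$, which under $\overline{M}(L/N) = M(L)$ is precisely $R^H_K$, giving the statement of the lemma. The main obstacle is not conceptual but careful bookkeeping: verifying the inflation-compatibility of Burnside-ring actions on $M$, the matching of the normaliser subgroups on quotienting by $N$, and the correct interpretation of $\idem{H}{N}{K}$ (reducing to the case $N \trianglelefteqslant H$) so that the reduction to the finite case is faithful at every level.
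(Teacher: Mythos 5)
Your argument is correct and is essentially the paper's own proof: both pass to the finite Mackey functor $\overline{M}(L/N)=M(L)$, identify $\idem{H}{N}{K}$ and $\idem{K}{N}{K}$ with the inflated finite-group idempotents (the paper does this via Proposition \ref{prop:idempotents}), invoke the classical finite-group isomorphism induced by restriction, and use $N_{H/N}(K/N)\cong N_H(K)/N$. Your remark that the hypothesis should really have $N$ normal in $H$ (or be replaced by its core) is a fair observation of a point the paper's statement glosses over, since its proof likewise treats $N$ as normal in the ambient group.
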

\begin{proof}
The method is to reduce the problem to the case of finite groups.
Similarly to Greenlees and May \cite[Definition 3]{GM92structure} or Th\'{e}venaz and Webb \cite{tw95},
we may define a $G/N$-Mackey functor $\overline{M}$ by
\[
\overline{M} (J/N) = M(J).
\]
Proposition \ref{prop:idempotents} implies that when $J \geqslant N$,
\[
\idem{J}{N}{K}  M(J)
\cong
e_{K/N}^{J/N} \overline{M} ( J/N ).
\]

Now that we have Mackey functors for finite groups, we may use
\cite[Example 5C(i) and Corollary 5.3]{greratmack} to see that
restriction induces an isomorphism
\[
e_{K/N}^{K/N} \overline{M} (K/N )^{N_{H/N}(K/N)}
\cong
e_{K/N}^{H/N} \overline{M} (H/N ).
\]
Alternative proofs of that result occur in
\cite[Lemma 6.1.9]{sugruethesis} and \cite{BKclassify}.
Combining these isomorphisms and the fact
$N_{H/N}(K/N)\cong N_H(K)/N$, we obtain
\begin{align*}
\idem{K}{N}{K}  M(K)^{N_H(K)}
&=
\idem{K}{N}{K}  M(K)^{N_H(K)/N}\\
&=
e_{K/N}^{K/N} \overline{M} ( K/N )^{N_{H/N}(K/N)} \\
&=
e_{K/N}^{H/N} \overline{M} ( H/N  ) \\
&=
\idem{H}{N}{K}  M(H). \qedhere
\end{align*}
\end{proof}

\subsection{Equivariant sheaves}

We begin with the definition of a $G$-equivariant sheaf over
a profinite $G$-space $X$ where $G$ is a profinite group.
The second author gives two equivalent definitions in
\cite{sugruethesis}, see also \cite{BSsheaves}.
We work with just one for brevity.

\begin{definition}\label{defn:eqsheaf}
A \textbf{$G$-equivariant sheaf} of $\mathbb{Q}$-modules over $X$ is a map
of topological spaces $p \co E \to X$ such that:
\begin{enumerate}
\item \label{item:sheafeq} $p$ is a $G$-equivariant map $p:E\rightarrow X$ of spaces with continuous $G$-actions,
\item \label{item:sheafab} $(E,p)$ is a sheaf space (\'etale space) of $\bQ$-modules,
\item \label{item:sheafcomb} each map $g \co p^{-1} (x) \rightarrow p^{-1} (g x)$ is a map of $\mathbb{Q}$-modules for every $x\in X,g\in G$.
\end{enumerate}
We will write this as either the pair $(E,p)$ or simply as $E$.
We call $E$ the \textbf{total space}, $X$ the \textbf{base space} and $p$ the structure map.

We call $p^{-1}(x)$ the \textbf{stalk} of $E$ at $x$ and denote it $E_x$.
An element of a stalk is called a \textbf{germ}.
\end{definition}
Note that points (\ref{item:sheafeq}) and (\ref{item:sheafab}) give a map of
sets for point (\ref{item:sheafcomb}),
but they do not imply that it is a map of $\mathbb{Q}$-modules.
Given a $G$-equivariant sheaf $(E,p)$ over $X$ and 
$x \in X$, the stalk $E_x$ (equipped with the discrete topology)
has a continuous action of $\stab_G(x)$.

As in the non-equivariant case, given an open subset $U \subseteq X$
the space of (continuous) sections
\[
E(U) = \Gamma(U,E) = \{ s \co U \lra E \mid p \circ s =\id_U \}
\]
has an addition operation (defined stalk-wise). The sections are not required to be
$G$-equivariant.
Allowing $U$ to vary defines a functor from the set of open subsets of
$X$ to $\mathbb{Q}$-modules.
Moreover, one can show that
\[
\underset{U \ni x}{\colim \,} \Gamma(U,E) = E_x=p^{-1} (x)
\]
as in the non-equivariant setting.

Given a section $s \co U \to E$ and $g \in G$, we can define
\[
g \ast s =  g \circ s \circ g^{-1} \co g U \lra E
\]
which sends $v=gu$ to $gs(u) = gs(g^{-1} v)$.
Hence, if $U$ is invariant under the action of a subgroup $H$
(that is, $h U = U$ for all $h \in H$), the space of sections
$\Gamma(U,E)$ has an $H$-action. The fixed points of this space are
those sections which commute with the action of $H$,
which we call \textbf{$H$-equivariant} sections.

\begin{definition}\label{defn:weyleqsheaf}
A \textbf{Weyl-$G$-sheaf} of $\mathbb{Q}$-modules over $\sub G$ is a $G$-sheaf of
$\mathbb{Q}$-modules over $\sub G$ such that action of $H$ on $E_H$ is trivial.
Hence $E_H$ has an action of the Weyl group $W_G H$ of $H$ in $G$.
We use $\weylsheaf{G}$ to denote this category.
\end{definition}

The idea of the classification result is to construct a Mackey functor $M$ from
a Weyl-$G$-sheaf $E$ by setting $M(H) = E(\sub H)^H$.
For this we need $\sub H$ to be an open subset of $\sub G$ when $H$ is open in $G$.

\begin{lemma}\label{lem:subHopenclosed}
For $H$ a closed subgroup of $G$,
$\sub H$ is closed in $\sub G$.
This subspace is also open when $H$ is open.
\end{lemma}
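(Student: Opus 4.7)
The plan is to work directly with the compact-open basis $O_G(N,J) = \{K \in \sub G \mid NK = J\}$ described before the statement, using the profinite separation principle that for any closed $H \leqslant G$ and any $g \notin H$ there is an open normal $N \trianglelefteq G$ with $gN \cap H = \emptyset$, equivalently $g \notin NH$.

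For closedness, I would show that the complement of $\sub H$ in $\sub G$ is open. Pick any $K \in \sub G$ with $K \not\subseteq H$ and choose $k \in K \setminus H$. By the separation principle just mentioned, there is an open normal $N \trianglelefteq G$ with $k \notin NH$. Consider the basic neighbourhood $O_G(N,NK)$ of $K$. For any $K' \in O_G(N,NK)$ we have $NK' = NK \ni k$, so $k = nk'$ for some $n \in N$, $k' \in K'$; then $k' = n^{-1}k$, and if $k' \in H$ we would get $k \in NH$, a contradiction. Hence $k' \in K' \setminus H$, so $K' \notin \sub H$. Thus $O_G(N,NK)$ avoids $\sub H$, giving openness of the complement.

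For the second claim, assume $H$ is open, and take any $K \in \sub H$. Since $H$ is open it contains an open normal subgroup $N$ of $G$ (for instance $N = \core_G(H)$). Then $NK \subseteq NH = H$ because $N \leqslant H$ and $K \leqslant H$. Consider the basic neighbourhood $O_G(N,NK)$ of $K$: for any $K' \in O_G(N,NK)$ one has $K' \subseteq NK' = NK \subseteq H$, so $K' \in \sub H$. Thus $\sub H$ contains an open neighbourhood of each of its points, and is therefore open.

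There is no real obstacle here; the only subtle ingredient is the separation statement that a closed subgroup of a profinite group equals the intersection of the open sets $NH$ as $N$ ranges over open normal subgroups of $G$, which is standard and recalled in the subsection on profinite groups at the start of Section \ref{sec:basics}. Both arguments then reduce to direct manipulations of a single basis element $O_G(N,NK)$, choosing $N$ small enough (for closedness) or contained in $H$ (for openness).
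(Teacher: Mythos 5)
Your proof is correct, and for the closedness half it takes a genuinely different route from the paper. For openness you and the paper do essentially the same thing: with $N=\core_G(H)$ open normal in $G$ and contained in $H$, each $K\in\sub H$ has the basic neighbourhood $O_G(N,NK)\subseteq \sub H$, which is exactly the paper's covering $\sub H=\bigcup_{K\in\sub H}O_H(\core(H),\core(H)K)$. For closedness the paper argues via the inverse limit description: $\sub H$ is the limit of the finite sets $\sub(HN/N)$, each closed in $\sub(G/N)$, so $\sub H$ is closed in $\sub G=\lim_N \sub(G/N)$. You instead show the complement is open by a direct manipulation of one basis element: given $K\not\leqslant H$ and $k\in K\setminus H$, you choose $N$ with $k\notin NH$ and check that every $K'\in O_G(N,NK)$ contains the element $n^{-1}k\notin H$, so $O_G(N,NK)$ misses $\sub H$. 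This is sound; the only external input is the separation fact $H=\bigcap_{N\opennormalsub G}NH$ for closed $H$, which is a standard consequence (via compactness) of the fact the paper does recall, that the open normal subgroups intersect in $\{e\}$ -- it is slightly more than what is literally stated in Section \ref{sec:basics}, and is found in Ribes--Zalesskii. The trade-off: your argument is more elementary and stays entirely within the explicit basis $O_G(N,J)$, at the cost of invoking that separation lemma; the paper's argument is shorter and fits the limit formalism used throughout (e.g.\ in the definition of $\sub G$ and in Proposition \ref{prop:idempotents}), but it leaves implicit both the identification $\sub H\cong\lim_N\sub(HN/N)$ compatibly with the inclusions into $\sub(G/N)$ and the fact that a limit of closed subsets is closed in the limit.
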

\begin{proof}
The open statement follows from noting that
\[
\sub H = \bigcup_{K \in \sub H} 
O_H(\core(H),  \core(H) K).
\]
For the closed statement we see that
$H$ is the limit of finite groups of the form $H/(H \cap N) =  HN/N$
for $N$ an open normal subgroup of $G$.
Since $\sub (HN/N)$ is a closed subset of $\sub (G/N)$,
the result follows by taking limits.
\end{proof}

We also need a result which can be described as saying that a section of
a $G$-equivariant sheaf over a profinite $G$-space is ``locally sub-equivariant''.
See \cite[Section 4.3]{sugruethesis} for related results. 

\begin{proposition}\label{prop:Weylequi}
If $E$ is a Weyl-$G$-sheaf over $\sub G$ and $K$ a closed subgroup of $G$, then any
$s_K\in E_K$ can be represented by an $NK$-equivariant section
\begin{align*}
s \co O_{NK}(N,NK)\lra E
\end{align*}
for $N$ some open normal subgroup of $G$.
\end{proposition}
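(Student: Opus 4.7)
The plan is to start with any section representing the germ $s_K$ and then shrink its domain until the Weyl hypothesis forces equivariance on a whole neighbourhood of $K$. Since the sets $O_G(N_0, N_0K)$ form a neighbourhood basis of $K$ in $\sub G$, the sheaf property lets me fix an open normal subgroup $N_0$ of $G$ and a section $s \co U \to E$ with $U = O_G(N_0, N_0K)$ and $s(K) = s_K$. The central auxiliary object is then the subset
\[
S = \{ (g,L) \in G \times U \mid gLg^{-1} \in U,\ g \cdot s(L) = s(gLg^{-1}) \},
\]
consisting of the pairs $(g,L)$ at which $s$ is ``equivariant''.

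The main technical step is to show that $S$ is open in $G \times U$. The maps $f(g,L) = s(gLg^{-1})$ and $h(g,L) = g \cdot s(L)$ are both continuous on the open set $D = \{(g,L) \in G \times U : gLg^{-1} \in U\}$, and a direct computation shows $p \circ f = p \circ h = \phi$, where $\phi(g,L) = gLg^{-1}$. Thus both $f$ and $h$ are continuous sections of the pullback sheaf $\phi^{-1}E \to D$, so the set $S$ on which they agree is open by the standard local-homeomorphism property of sheaf spaces. The Weyl hypothesis now produces a compact slice inside $S$: for each $k \in K$ the triviality of the $K$-action on $E_K$ gives
\[
k \cdot s(K) = k \cdot s_K = s_K = s(K) = s(kKk^{-1}),
\]
so $K \times \{K\} \subseteq S$.

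Applying the tube lemma to the compact set $K \times \{K\}$ inside the open set $S$ produces open sets $W \supseteq K$ in $G$ and $V \ni K$ in $U$ with $W \times V \subseteq S$. Using compactness of $K$ together with the fact that open normal subgroups form a neighbourhood basis of the identity in $G$, I can pick an open normal $N_1$ with $N_1 K \subseteq W$; separately, the neighbourhood basis of $K$ in $\sub G$ given by the sets $O_G(N, NK)$ yields an open normal $N_2$ with $O_G(N_2, N_2 K) \subseteq V$. Setting $N = N_1 \cap N_2$, which is still open and normal in $G$, and restricting $s$ to $O_G(N,NK) = O_{NK}(N,NK)$ produces the desired section; it is $NK$-equivariant by construction, using that $O_G(N,NK)$ is preserved by conjugation by elements of $NK$. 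The main obstacle I anticipate is the openness of $S$: this hinges on recognising $f$ and $h$ as sections of a common pullback sheaf rather than as arbitrary continuous maps into $E$, after which the rest reduces to a standard tube-lemma and neighbourhood-basis argument.
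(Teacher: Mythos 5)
Your proof is correct, but it takes a genuinely different route from the paper's. The paper picks a representing section $s$ over a basic open set $U=O_{N'K}(N',N'K)$, observes that its image $V=s(U)$ is a compact open subset of the total space and hence invariant under some open normal subgroup $N''$, and then, for $N=N'\cap N''$, deduces equivariance in one line from $p(s(nu))=nu=p(n\,s(u))$ together with the injectivity of $p$ restricted to $V$; this directly yields $N$-equivariance, the passage to the stated $NK$-equivariance being left implicit (it follows by writing $k\in K\subseteq NL$ as $k=nl$ with $n\in N$, $l\in L$ and using that $E_L$ is $L$-fixed). You instead introduce the equivariance locus $S\subseteq G\times U$, prove it is open by viewing $(g,L)\mapsto g\cdot s(L)$ and $(g,L)\mapsto s(gLg^{-1})$ as two continuous maps into the \'etale space lying over the same base map and invoking the local-homeomorphism property, place $K\times\{K\}$ inside $S$ using the Weyl condition at $K$ alone, and finish with the tube lemma and the two neighbourhood-basis choices $N_1K\subseteq W$ and $O_G(N_2,N_2K)\subseteq V$; the final check that $NK\subseteq N_1K\subseteq W$ and $O_G(N,NK)\subseteq V$ for $N=N_1\cap N_2$ indeed gives $NK$-equivariance on an $NK$-invariant domain. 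Comparing the two: the paper's argument is shorter and avoids any product-space topology, but as written it only exhibits $N$-equivariance and needs the Weyl hypothesis at every stalk of the neighbourhood to complete the upgrade; yours is longer but delivers the full $NK$-equivariance explicitly and uses the Weyl hypothesis only at the single subgroup $K$, the rest being purely topological (openness of agreement loci of sections, compactness of $K$). Both are valid proofs of the proposition.
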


\begin{proof}
Let $s \co O_{M K}(M ,M K)=U \to E$ be a section representing $s_K$. 
The set $s(U)$ is not necessarily closed under the action of $G$, 
but if $gu \in U$ and $gs(u) \in s(U)$ for some $g \in G$ and $u \in U$, then 
\[
ps(gu) = gu = gp(s(u)) = p(g s(u))
\]
as $p$ is $G$-equivariant. Since $p$ is injective when restricted to $s(U)$, 
we see that $s(gu)=g(s(u))$. The method of the proof is to restrict the domain and codomain
so that they are closed under the action of $NK$, for $N$ some open normal subgroup of $G$.

The set $s(U)$ is open (by definition of the topology on a sheaf space)
and is the image of compact set.  
Hence, there is an open normal subgroup $M'$ of $G$ such that $M' s(U)=s(U)$ 
by \cite[Lemma A.1]{BSsheaves}. 
Let 
\[
V= \bigcap_{k \in K} k s(U)
\]
which consists of only finally many distinct terms as 
$M' \cap K$ has finite index in $K$. Moreover, $s_K=s(K) \in V$
as $s(K)$ is $K$-fixed, so $V$ is a non-empty compact open subset of $E$
which is invariant under $M''K$ for some open normal subgroup $M''$ of $G$.

The set $p(V)$ is open and contains $K$, hence we can find a basic open set containing $K$
of the form
\[
W=O_{N K}(N ,N K) \subseteq O_{M K}(M ,M K) \cap p(V)
\]
for some $N \leqslant M \cap M''$.
The section $s_{\mid W}$ is $NK$-equivariant by our earlier argument as 
$gw \in W$ and $gs(w) \in V \subseteq s(U)$ for all $g \in NK$
and $w \in W$.
\end{proof}

\section{The functors}\label{sec:functors}
In this chapter we will construct a correspondence between rational $G$-Mackey functors
and Weyl-$G$-sheaves over $\sub G$.
We shall explicitly construct functors between the categories,
see Theorems \ref{thm:sheaftomack} and \ref{thm:macktosheaf}.
In Theorem \ref{thm:equivalencemain} we will see that these functors are equivalences
of categories.

\subsection{Weyl-\texorpdfstring{$G$}{G}-sheaves determine Mackey Functors}

We define a functor:
\[
\mackeyfunctor \co
\weylsheaf{G}
\rightarrow
\mackey{G}.
\]
We will not need the input sheaf to be a Weyl-$G$-sheaf, a detail we return to 
in Subsection \ref{subsec:weyltosheaf}

\begin{construction}\label{con:mackey_construct}
Let $(E,p)$ be a $G$-sheaf over $\sub G$, we  define a Mackey functor\index{Mackey functor} $\mackeyfunctor(E)$ as follows.

For $H\leqslant G$ an open subgroup define
\[
\mackeyfunctor(E)(H)=E( \sub H)^H
=
\{ s \co \sub H \lra E \mid p \circ s =\id_{\sub H} \}^H
\]
the set of $H$-equivariant sections on $\sub H$.
The conjugation maps $C_g$ are given by the $G$-action on the sheaf $E$.

For $K \leqslant H$ another open subgroup,
the restriction map
\[
R^H_K \co E(\sub H)^H\rightarrow E( \sub K)^K
\]
is given by restriction of a section to the subspace $\sub K$.

Let $T \subset H$ be a left transversal of $K$ in $H$.
For $\beta$ a section of $\sub K$, let $\overline{\beta}$
be the extension by zero to $\sub H$.
We define the induction map by
\[
I^H_K \co E( \sub K)^K\rightarrow E( \sub  H)^H, \quad
\beta \mapsto \underset{h \in T}{\sum} h \ast \overline{\beta}.
\]

We prove that the induction functor is well defined in Lemma \ref{lem:well_define}.
The structure maps compose and interact appropriately by Lemma \ref{lem:associativestructuremaps}.
Lemma \ref{lem:Mackey_axiom} proves that the Mackey axiom holds.
\end{construction}

\begin{lemma}\label{lem:well_define}
The induction map given in Construction \ref{con:mackey_construct} is well defined.
\end{lemma}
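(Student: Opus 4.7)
The plan is to verify three things in sequence: that the extension by zero $\overline{\beta}$ is a well-defined (continuous) section of $E$ over $\sub H$, that the sum $\sum_{h\in T} h\ast \overline{\beta}$ is independent of the choice of left transversal $T$ of $K$ in $H$, and that the resulting element lies in the $H$-equivariant sections $E(\sub H)^H$. The sum is finite since $K$ is open in the compact group $H$ and therefore of finite index.

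For the first point, I would invoke Lemma \ref{lem:subHopenclosed}: since $K$ is open in $H$, $\sub K$ is open and closed in $\sub H$. Consequently, extending $\beta\co \sub K \to E$ by the (continuous) zero section on the clopen complement $\sub H\setminus \sub K$ produces a continuous section $\overline{\beta}\co \sub H \to E$. Next, for any $h\in H$, the subset $\sub H$ is invariant under the conjugation action of $h$ (since conjugation by $h$ preserves the property of being a closed subgroup of $H$), so the translated section $h\ast \overline{\beta}\co \sub H \to E$ is again defined on all of $\sub H$. Adding them stalkwise gives a well-defined section of $E$ over $\sub H$ for each choice of $T$.

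For independence of $T$, let $T$ and $T'$ be two left transversals. Writing each $h'\in T'$ uniquely as $h'=hk$ with $h\in T$ and $k\in K$, one has $h'\ast \overline{\beta} = h\ast(k\ast \overline{\beta})$. The key computation is that $k\ast \overline{\beta} = \overline{\beta}$ for every $k\in K$: on the clopen subset $\sub K$, which is $K$-invariant, this is the statement that $\beta$ is $K$-equivariant, while on its complement $k\ast$ sends the zero section to the zero section since conjugation by $k\in K$ preserves the complement $\sub H \setminus \sub K$. Summing over $T'$ then reproduces the sum over $T$.

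Finally, for $H$-equivariance, fix $h_0\in H$. Then
\[
h_0 \ast \sum_{h\in T} h\ast \overline{\beta}  =  \sum_{h\in T}(h_0 h)\ast \overline{\beta},
\]
and since $\{h_0 h : h\in T\}$ is another left transversal of $K$ in $H$, the previous step identifies this sum with $\sum_{h\in T} h\ast \overline{\beta}$, proving $h_0\ast I^H_K(\beta) = I^H_K(\beta)$. The main subtlety throughout is the compatibility of the zero extension with the $G$-action on $E$, which is exactly what the clopen nature of $\sub K\subseteq \sub H$ and the $K$-equivariance of $\beta$ deliver.
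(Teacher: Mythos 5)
Your proof is correct and follows essentially the same route as the paper's: independence of the transversal comes from the $K$-equivariance of $\beta$ (so that representatives of the same coset, or two transversals matched coset by coset, give the same translated section), and $H$-equivariance follows by re-indexing over the translated transversal. You additionally spell out why the extension by zero is a continuous section (clopenness of $\sub K$ in $\sub H$ via Lemma \ref{lem:subHopenclosed}), a point the paper leaves implicit; this is a welcome but not divergent elaboration.
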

\begin{proof}
Let $hK=h'K$ be two representatives for the same coset.
Since the section $\beta$ is $K$-fixed,
\[
h \ast \overline{\beta} = h' \ast \overline{\beta}
\]
as $h$ and $h'$ differ by an element of $K$.
It follows that $I^H_K $ is independent of the choice of $T$.

The sum of $h \ast \overline{\beta}$ over $h \in T$ is $H$-equivariant
as $T$ is a transversal.
\end{proof}

\begin{lemma}\label{lem:associativestructuremaps}
The structure maps of $\mackeyfunctor(E)$ are unital, associative, transitive
and equivariant.
\end{lemma}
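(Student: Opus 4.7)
The plan is to verify each of the four properties in turn, exploiting the fact that all three kinds of structure map are built from standard operations on sections (restriction of domain, extension by zero on the open–closed subspace $\sub K \subseteq \sub H$, summation over transversals, and the $G$-action $g \ast s = g \circ s \circ g^{-1}$). Most items reduce to routine sheaf-theoretic checks; the one that requires care is the transitivity of induction.

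For the unital axioms, $R^H_H$ is restriction of a section along the identity inclusion $\sub H \hookrightarrow \sub H$, hence is $\id$. For $I^H_H$ one may take the transversal $T=\{e\}$, in which case the extension by zero $\overline{\beta}$ on $\sub H$ is just $\beta$ and $e \ast \overline{\beta} = \beta$. For $C_h$ with $h \in H$ acting on $s \in E(\sub H)^H$, the $H$-equivariance of $s$ gives exactly $h \circ s \circ h^{-1} = s$. Associativity of conjugation $C_{gh} = C_g \circ C_h$ is a direct consequence of the associativity of the $G$-action on $E$, combined with $(gh) \sub H = g(h \sub H)$.

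Transitivity of restriction is immediate: for $L \leqslant K \leqslant H$ the inclusions $\sub L \subseteq \sub K \subseteq \sub H$ compose, and restriction of sections respects composition of subspace inclusions. For transitivity of induction, fix $L \leqslant K \leqslant H$, a transversal $T'$ of $L$ in $K$, and a transversal $S$ of $K$ in $H$; then $ST' = \{st : s \in S,\ t \in T'\}$ is a transversal of $L$ in $H$. The main technical point is the compatibility
\[
\overline{t \ast \overline{\beta}^{\,\sub K}}^{\,\sub H}
= t \ast \overline{\beta}^{\,\sub H}
\quad \text{for } t \in K \leqslant H,
\]
which follows because both sections are supported on $\sub(tLt^{-1}) \subseteq \sub K$, agree there with $t \ast \beta$, and vanish elsewhere; here we use Lemma \ref{lem:subHopenclosed} to know that $\sub K$ and $\sub L$ are clopen in $\sub H$, so extension by zero is continuous and commutes with the action by elements of $H$ preserving the relevant supports. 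Expanding both sides then yields
\[
I^H_K \circ I^K_L(\beta)
= \sum_{s \in S} \sum_{t \in T'} (st) \ast \overline{\beta}^{\,\sub H}
= I^H_L(\beta).
\]

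Equivariance of restriction is immediate from $g\sub K = \sub(gKg^{-1})$ together with the fact that $g \ast (-)$ is defined pointwise: restricting to $\sub K$ and then applying $g \ast$ gives the same section on $\sub(gKg^{-1})$ as applying $g \ast$ first and then restricting to $\sub(gKg^{-1})$. For equivariance of induction, if $T$ is a transversal of $K$ in $H$ then $gTg^{-1}$ is a transversal of $gKg^{-1}$ in $gHg^{-1}$, and the same support argument as above shows $g \ast \overline{\beta} = \overline{g \ast \beta}$, so
\[
C_g \circ I^H_K(\beta)
= \sum_{h \in T} (gh) \ast \overline{\beta}
= \sum_{h \in T} (ghg^{-1}) \ast (g \ast \overline{\beta})
= I^{gHg^{-1}}_{gKg^{-1}} \circ C_g(\beta).
\]
The expected main obstacle is the bookkeeping for transitivity of induction, specifically verifying the commutation of extension by zero with the translation action $t \ast (-)$; this is what forces the use of the clopen structure of $\sub L \subseteq \sub K \subseteq \sub H$ established earlier.
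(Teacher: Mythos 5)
Your proof is correct and follows essentially the same route as the paper: transitivity of induction via combining a transversal of $L$ in $K$ with one of $K$ in $H$, and equivariance via conjugating a transversal. You simply spell out details the paper leaves implicit, notably the compatibility of extension by zero with the translation action and the use of Lemma \ref{lem:subHopenclosed} to make extension by zero legitimate.
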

\begin{proof}
The part of most interest is that the induction maps are transitive
and equivariant.
The first follows from the fact that
given open subgroups $J \leqslant K \leqslant H$,
one can combine a transversal of $J$ in $K$ with a
transversal of $K$ in $H$ to get a transversal of $J$ in $H$.
This gives the transitivity.
For equivariance, the result follows as one can conjugate
a transversal to get a transversal of the conjugate.
\end{proof}

We note that $\sub H$ is invariant under the action of $N_G H \leqslant G$,
so $E( \sub H)$ has a continuous action of $N_G H$.

It remains to show the Mackey axiom, which is a direct calculation.
\begin{lemma}\label{lem:Mackey_axiom}
The construction $\mackeyfunctor(E)$ satisfies the Mackey axiom.
\end{lemma}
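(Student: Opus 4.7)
The plan is to expand both sides of the Mackey axiom using the formulas of Construction \ref{con:mackey_construct} and then use the standard double coset decomposition of transversals to match the terms one-by-one. Explicitly, the left hand side $R^H_K \circ I^H_L(\beta)$ is the restriction to $\sub K$ of the section $\sum_{h \in T} h \ast \overline{\beta}$, where $T$ is any transversal of $L$ in $H$ and $\overline{\beta}$ is the extension by zero of $\beta$ from $\sub L$ to $\sub H$. My first step will be to choose a convenient transversal $T$ adapted to the double coset decomposition.

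Specifically, the standard fact that $H = \bigsqcup_{x \in [K \backslash H / L]} KxL$, combined with the $K$-equivariant bijection $KxL / L \cong K / (K \cap xLx^{-1})$ sending $kxL \mapsto k(K \cap xLx^{-1})$, lets me write $T = \bigsqcup_{x} \{\, kx \mid k \in S_x \,\}$, where $S_x$ is a transversal of $K \cap xLx^{-1}$ in $K$. Substituting this choice into the left hand side gives
\[
R^H_K \circ I^H_L(\beta) = \sum_{x \in [K \backslash H / L]} \sum_{k \in S_x} \bigl( (kx) \ast \overline{\beta}\bigr)\Big|_{\sub K},
\]
and the goal becomes showing that the inner sum over $k \in S_x$ matches the $x$-term of the right hand side, namely $I^K_{K \cap xLx^{-1}} \circ C_x \circ R^L_{L \cap x^{-1} K x}(\beta)$.

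The core term-by-term verification is a direct unravelling. On one hand, $(kx) \ast \overline{\beta}$ evaluated at $A \in \sub K$ gives $kx \cdot \overline{\beta}(x^{-1}k^{-1} A kx)$, which is nonzero precisely when $A \leqslant K \cap k(xLx^{-1})k^{-1}$. On the other, applying $R^L_{L \cap x^{-1}Kx}$ restricts $\beta$ to $\sub(L \cap x^{-1}Kx)$, applying $C_x$ (the sheaf action of $x$) transports this to a section on $\sub(K \cap xLx^{-1})$, extending by zero to $\sub K$ and then applying $k \ast (-)$ yields a section supported on $\sub(K \cap k(xLx^{-1})k^{-1})$ with the same value $kx \cdot \beta(x^{-1}k^{-1} A kx)$ on that support. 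Matching these expressions on the nose, and summing over $k \in S_x$ (which is the transversal used to define $I^K_{K \cap xLx^{-1}}$), gives the desired equality.

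The main technical obstacle is careful bookkeeping of the interaction between the extension-by-zero operation and the $G$-action $g \ast (-)$ on sections, in particular verifying that the support of $(kx)\ast\overline{\beta}$ restricted to $\sub K$ coincides exactly with the image in $\sub K$ of the composite on the right, and that the ambient space in which one extends by zero is consistent on both sides. Once this is handled (using that $\sub(K \cap xLx^{-1}) = \sub K \cap \sub(xLx^{-1})$ and that conjugation by $k \in K$ preserves $\sub K$), the equality of the two sides is immediate, and transitivity and equivariance of the structure maps, already established in Lemma \ref{lem:associativestructuremaps}, guarantee that the decomposition does not depend on the choices of transversals $T$ and $S_x$.
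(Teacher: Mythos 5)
Your argument is correct and is essentially the paper's own proof: both refine a transversal of $L$ in $H$ along the double coset decomposition $H=\coprod_{x\in[K\backslash H/L]}KxL$ into elements $kx$ with $k$ running over a transversal of $K\cap xLx^{-1}$ in $K$, and then match the restricted, extended-by-zero sections term by term against $I^K_{K\cap xLx^{-1}}\circ C_x\circ R^L_{L\cap x^{-1}Kx}(\beta)$. The only slip is cosmetic: independence of the choice of transversals is Lemma~\ref{lem:well_define} rather than Lemma~\ref{lem:associativestructuremaps}.
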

\begin{proof}
We start with $J,L\leqslant H$ where $H,J,L\leqslant G$ are open. We can decompose $H$
into double cosets.
\[
H
=
\underset{hL\in H/L}{\coprod} hL
=
\underset{x\in\left[J\backslash H\slash L\right]}{\coprod}JxL
=
\underset{x\in \left[J\backslash H\slash L\right]}{\coprod}\,\,\,\underset{j\in J}{\bigcup} jxL
=
\underset{\substack{x\in\left[J\backslash H\slash L\right]  \\ j_x \in  J/J\cap xLx^{-1}}}
{\coprod} j_x xL
\]

Given a transversal $T$ for $L$ in $H$ we have
\[
R^H_JI^H_L(\beta)
=
(\underset{h \in T}{\sum} h \ast \overline{\beta})\Big|_{\sub J}
=
\Big( \sum_{\substack{x\in\left[J\backslash H\slash L\right]  \\ j_x \in  J/J\cap xLx^{-1}}}
j_x x \ast \overline{\beta}
\Big) {\Big|}_{\sub J}
=
\sum_{\substack{x\in\left[J\backslash H\slash L\right]  \\ j_x \in  J/J\cap xLx^{-1}}}
\left(j_x x\overline{\beta|_{\sub \left(x^{-1}Jx\cap L\right)}}\right)
\]
where the extension by zero in the third term is with respect to $H$ and in the
in the fourth term it is with respect to $J$. If we start from the other direction we have:
\begin{align*}
\underset{x\in \left[J\backslash H\slash L\right]}{\sum}
I^J_{J\cap xLx^{-1}} \circ C_x \circ R^L_{x^{-1}Jx\cap L}(\beta)
&=
\sum_{\substack{x\in\left[J\backslash H\slash L\right]  \\ j_x \in  J/J\cap xLx^{-1}}}
j_x x \ast \left(\overline{\beta|_{S\left(L\cap x^{-1}Jx\right)}}\right)
\\
&=
\sum_{\substack{x\in\left[J\backslash H\slash L\right]  \\ j_x \in  J/J\cap xLx^{-1}}}
\left(j_x x\overline{\beta|_{S\left(L\cap x^{-1}Jx\right)}}\right)
\end{align*}
proving that the two sides coincide.
\end{proof}

Similar arguments to the above show that a map of sheaves
$E \to E'$ induces a map of $G$-Mackey functors
$\mackeyfunctor(E) \to \mackeyfunctor(E')$.
We summarise this section in the following theorem.

\begin{theorem}\label{thm:sheaftomack}
Let $G$ be a profinite group. If $E$ is a $G$-sheaf of $\bQ$-modules over $\sub G$,
then $\mackeyfunctor(E)$ from Construction \ref{con:mackey_construct}
is a Mackey functor and the assignment is functorial.
Hence, there is a functor
\begin{align*}
\mackeyfunctor \colon \weylsheaf{G} \rightarrow \mackey{G}.
\end{align*}
\end{theorem}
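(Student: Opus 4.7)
The plan is to assemble the preceding lemmas into a coherent verification of the Mackey functor axioms, and then separately verify functoriality. For the first assertion, Construction \ref{con:mackey_construct} already provides the data of $\mackeyfunctor(E)(H) = E(\sub H)^H$ together with restriction, induction, and conjugation maps; Lemma \ref{lem:subHopenclosed} ensures $\sub H$ is an open (and closed) subspace of $\sub G$ for $H$ open, so these sections are well-defined. Lemma \ref{lem:well_define} shows the induction map does not depend on the choice of transversal and lands in $H$-equivariant sections. Lemma \ref{lem:associativestructuremaps} verifies axioms (1)--(3) of Definition \ref{defn:mackey}, namely the unital, transitivity, associativity and equivariance conditions, while Lemma \ref{lem:Mackey_axiom} verifies axiom (4). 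Collecting these establishes that $\mackeyfunctor(E)$ is a Mackey functor.

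For functoriality, given a map $\phi \co E \to E'$ of $G$-sheaves over $\sub G$, I would define the induced map by postcomposition on sections: at each open $H \leqslant G$, set
\[
\mackeyfunctor(\phi)(H) \co E(\sub H)^H \lra E'(\sub H)^H, \qquad s \longmapsto \phi \circ s.
\]
Since $\phi$ is $G$-equivariant, $\phi \circ s$ is $H$-equivariant whenever $s$ is, so this map is well-defined. Compatibility with restriction is immediate because restriction is itself restriction of section domains. Compatibility with conjugation is the $G$-equivariance of $\phi$, and the identity and composition axioms for a functor are trivial from the definition.

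The only nontrivial check is that postcomposition commutes with induction: given $\beta \in E(\sub K)^K$ and a transversal $T$ for $K$ in $H$, one needs
\[
\phi \circ \Big( \sum_{h \in T} h \ast \overline{\beta} \Big) = \sum_{h \in T} h \ast \overline{\phi \circ \beta}.
\]
Since $\phi$ is a map of sheaves of $\bQ$-modules it commutes with the stalkwise sum, so this reduces to verifying that $\phi \circ (h \ast \overline{\beta}) = h \ast \overline{\phi \circ \beta}$ for each $h$. The $G$-equivariance of $\phi$ handles the $h \ast (-)$ action, and commutation with extension by zero follows from the fact that $\phi$ sends the zero germ at each point of $\sub H \setminus \sub K$ to the zero germ in $E'$.

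The main (mild) obstacle is the bookkeeping around extension by zero: one must be confident that the extension $\overline{\beta}$ is continuous as a section on all of $\sub H$, which uses that $\sub K$ is both open and closed in $\sub H$ when $K$ is open in $H$ (again via Lemma \ref{lem:subHopenclosed} applied with $G$ replaced by $H$). Once this is in hand, there are no deeper difficulties, and the theorem follows by combining the lemmas with the functoriality calculation above.
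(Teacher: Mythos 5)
Your proposal is correct and follows essentially the same route as the paper: the Mackey functor axioms are exactly the content of Lemmas \ref{lem:well_define}, \ref{lem:associativestructuremaps} and \ref{lem:Mackey_axiom} applied to Construction \ref{con:mackey_construct}, and your postcomposition argument for functoriality (including the check that $\phi$ commutes with extension by zero and the $h \ast (-)$ action) is precisely the ``similar arguments'' the paper leaves implicit, just written out in more detail.
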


\subsection{Mackey Functors determine Weyl-\texorpdfstring{$G$}{G}-sheaves}

We construct a functor in the opposite direction, from rational $G$-Mackey functors to
Weyl-$G$-sheaves of $\bQ$-modules over $\sub G$.
We start by constructing the stalks of the Weyl-$G$-sheaf.

\begin{definition}\label{lem:mackeystalks}
For $M$ a rational $G$-Mackey functor
and $K$ a closed subgroup of $G$, we define a $\mathbb{Q}$-module
\[
\sheaffunctor(M)_K=\underset{J \geqslant K}{\colim} \, M(J)_{(K)} =
\underset{N \opensub G}{\colim} M(NK)_{(K)} =
\underset{N \opensub G}{\colim} \idem{NK}{N}{NK}  M(NK).
\]
The maps in the colimits are induced by the restriction maps of $M$ and applying idempotents.
The notation $M(J)_{(K)}$ refers to the stalk of
the $\sub J/J$-sheaf $M(J)$ at the $J$-conjugacy class of $K$, for $J$ an
open subgroup of $G$ containing $K$.
\end{definition}

The restriction maps are compatible
with taking stalks since
$\idem{H}{N}{NK} \in \burnsidering(H)$ restricts to an idempotent
$f \in \burnsidering(J)$ for $K \leqslant J \leqslant H$, where the support of $f$
is those subgroups of $J$ (up to $J$-conjugacy) which are $H$-conjugate
to an element of $O_H(N, NK)$. This support contains
\[
K \in O_J(N, NK) = O_H(N, NK) \cap \sub J.
\]

We begin our construction of a rational $G$-equivariant
sheaf $(E,p)$ over $\sub G$ from a Mackey functor $M$.
We first define $E$ and $p$.

\begin{construction}\label{con:sheaf_mack}
If $M$ is a rational $G$-Mackey functor,
we define the underlying set of the sheaf space by setting
\[
\sheaffunctor(M)=\underset{K\in \sub G}{\coprod} \sheaffunctor(M)_K.
\]
The projection map $p \co \sheaffunctor(M) \to \sub G$ sends all of $\sheaffunctor(M)_K$ to $K$.
The conjugation maps of $M$ induces maps
\[
\sheaffunctor(M)_K \lra \sheaffunctor(M)_{g K g^{-1}}
\]
for each $g \in G$. Thus $\sheaffunctor(M)$ has a $G$-action and $p$ is $G$-equivariant.

We construct a set of sections that will give a basis for a topology
on $\sheaffunctor(M)$. 
For $H$ an open subgroup of $G$, define a map 
\begin{align*}
\theta_H \colon M(H) 
\lra 
\Big\lbrace s \colon \sub  H\lra \coprod_{K \in \sub H} \sheaffunctor(M)_K \Big\rbrace ^H 
\end{align*}
where $\theta_H(m)$ sends $K \leqslant H$ to $m_K$, the image of $m \in M(H)$ in $\sheaffunctor(M)_K$.
We must check that $\theta_H(m)$ defines an $H$-equivariant map. 
As restriction and applying idempotents are equivariant, the square below commutes.
\[
\xymatrix{
M(H) \ar[r] \ar[d]^{C_h} &
\sheaffunctor(M)_K \ar[d]^{C_h} \\
M(H) \ar[r]  &
\sheaffunctor(M)_{hKh^{-1}} 
}
\]
Since $C_h= \id_{M(H)}$, we have the desired equivariance. 
We also see that $m_K$ is fixed by $N_H(K)$. 

We can restrict a section $s =\theta_{NK} (m)$ coming from $m \in \idem{NK}{N}{NK} M(NK)$ 
to a section over 
\[
O_{NK}(N,NK) = O_G(N,NK).
\] 
The sets
$s(O_G(N,NK))$ for varying open and normal $N$, 
closed $K$, and $s=\theta_{NK}(m)$ for $m$ in $\idem{NK}{N}{NK} M(NK)$,
define a topology on  $\sheaffunctor(M)$ by Lemma \ref{lem:sectionbasis}.

The $G$-action is continuous by Lemma \ref{lem:ctsGact}
and the projection map $p$ is a $G$-equivariant local homeomorphism
by Proposition \ref{prop:ctsGmaplocal}.

Lemma \ref{lem:Qmod} shows that $(\sheaffunctor(M),p)$ is a sheaf of $\bQ$-modules.
Lemma \ref{lem:Weyl} completes the construction by showing that we have a
Weyl-$G$-sheaf.
\end{construction}

\begin{lemma}\label{lem:sectionbasis}
The sets $s(O_G(N,NL))$ constructed above form a basis for a topology
on
\[
\sheaffunctor(M)= \underset{K\in \sub G}{\coprod} \sheaffunctor(M)_K
=
\underset{K \in \sub G}{\coprod} \underset{N \opensub G}{\colim} \idem{NK}{N}{NK}  M(NK).
\]
\end{lemma}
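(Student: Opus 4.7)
The plan is to check the two standard axioms for a basis: that every point of $\sheaffunctor(M)$ lies in a set of the prescribed form, and that whenever a point lies in two such sets, it also lies in a third that is contained in their intersection.

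The covering property is immediate from the colimit description of the stalks. Given $x \in \sheaffunctor(M)_K$, pick a representative $m \in \idem{NK}{N}{NK} M(NK)$ for some open normal subgroup $N$ of $G$; then $s=\theta_{NK}(m)$ restricted to $O_G(N,NK)$ sends $K$ to $x$, so $x \in s(O_G(N,NK))$.

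For the intersection property, suppose $x \in s_1(O_G(N_1,N_1K_1)) \cap s_2(O_G(N_2,N_2K_2))$ with $s_i = \theta_{N_iK_i}(m_i)$. Let $K=p(x)$; applying $p$ forces $K \in O_G(N_1,N_1K_1) \cap O_G(N_2,N_2K_2)$, so $N_iK = N_iK_i$ for $i=1,2$. Set $N = N_1 \cap N_2$, an open normal subgroup of $G$. Then $K \in O_G(N,NK)$ and, as in the basis argument for $\sub G$ recalled in the excerpt, $O_G(N,NK) \subseteq O_G(N_1,N_1K_1) \cap O_G(N_2,N_2K_2)$. Both $m_1$ and $m_2$ have germ $x$ at $K$, so by the definition of the colimit $\sheaffunctor(M)_K$ there exists an open normal subgroup $N' \subseteq N$ of $G$ such that the images of $m_1$ and $m_2$ in $\idem{N'K}{N'}{N'K} M(N'K)$ coincide; call this common element $m$, and set $s = \theta_{N'K}(m)$. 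Then $s(O_G(N',N'K))$ is a basis set through $x$, and I claim it is contained in $s_i(O_G(N_i,N_iK_i))$ for each $i$.

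The key step, which is the main technical obstacle, is to verify the pointwise equality $s(L) = s_i(L)$ for every $L \in O_G(N',N'K)$. This amounts to the statement that the germ of $m$ at $L$ in $\sheaffunctor(M)_L$ agrees with the germ of $m_i$ at $L$. Since $L \leqslant N'L = N'K \leqslant N_iK_i$, both $m$ and $m_i$ can be restricted down along the colimit system defining $\sheaffunctor(M)_L$ through a common open subgroup (say $N'K$ itself, or any $N''L$ with $N'' \subseteq N'$ open normal in $G$). By construction $m$ is the image of $m_i$ under the restriction $M(N_iK_i) \to M(N'K)$ followed by the idempotent $\idem{N'K}{N'}{N'K}$, and the stalk map $M(N'K) \to \sheaffunctor(M)_L$ factors through this idempotent because $L \in O_G(N',N'K)$ is in its support. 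Hence $s$ and $s_i$ agree as functions on $O_G(N',N'K)$, giving the containment of images and completing the verification.
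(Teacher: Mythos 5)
Your proof is correct and takes essentially the same approach as the paper: the covering axiom via choosing a representative $m \in \idem{NK}{N}{NK} M(NK)$ for a germ, and the intersection axiom via the agreement of the two germs at $p(x)$ in the colimit stalk, which yields a common refinement over a smaller basic set $O_G(N',N'K)$ contained in both original sets. The only difference is that you spell out the pointwise agreement $s(L)=s_i(L)$ across the whole neighbourhood (via the compatibility of the idempotents with restriction), a step the paper's proof leaves implicit.
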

\begin{proof}
Given $a \in \sheaffunctor(M)_L$, we can find a representative $m \in \idem{NL}{N}{NL} M(NL)$.
We then take the composite
\[
\idem{NL}{N}{NL} M(NL) 
\lra 
M(NL) 
\xrightarrow{\theta_{NL}}
\Big\lbrace s \colon \sub  NL \lra \coprod_{K \in \sub NL} \sheaffunctor(M)_K \Big\rbrace ^{NL}.
\]
We see that $m_L =\theta_{NL}(m)(L)= a$ and so each germ is in the image of one of our chosen sections.
It follows that the open sets $s(O_G(N,NK))$ cover $\sheaffunctor(M)$.

Now we show that the intersection of two basis sets is
a union of basis elements. 
Take sections $t_1=\theta_{N_1 K_1} (m_1)$ and $t_2=\theta_{N_2 K_2} (m_2)$
and let
\[
x \in t_1(O_G(N_1,N_1K_1))\cap t_2(O_G(N_2,N_2K_2)).
\]
We construct a $t(O_G(N,NK))$ satisfying
\[
x \in t(O_G(N,NK)) \subseteq t_1(O_G(N_1,N_1K_1))\cap t_2(O_G(N_2,N_2K_2)).
\]
Given such an $x$, we let $L=p(x)$, which is a closed subgroup of $G$.  
We see that 
\[
L \in O_G(N_1,N_1K_1)\cap O_G(N_2,N_2K_2)
\]
and ${t_1}(L)={t_2}(L)$ in $\sheaffunctor(M)_L$.
Hence, there is an open normal subgroup $N \leqslant N_1 \cap N_2$ of $G$ such that
$m_1$ and $m_2$ restricted to $\idem{NL}{N}{NL} M(NL)$ agree.
It follows that $t_1$ and $t_2$ agree when restricted to 
$O_{NL}(N,NL) = O_G(N,NL)$.
We define $t$ to be this common refinement and see
\[
t(O_G(N,NL))\subseteq t_1(O_G(N_1,N_1K_1))\cap t_2(O_G(N_2,N_2K_2)). \qedhere
\]
\end{proof}

\begin{lemma}\label{lem:ctsGact}
The $G$-action on the space $\sheaffunctor(M)$ from Construction \ref{con:sheaf_mack} is continuous.
\end{lemma}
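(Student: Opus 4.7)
The plan is to verify continuity pointwise on the basis: given $(g,x)$ with $gx$ lying in a basis neighbourhood $U = s(O_G(N,NK))$, where $s = \theta_{NK}(m)$ for some $m \in \idem{NK}{N}{NK} M(NK)$, I would construct an open product $V \times W \ni (g,x)$ whose image under the action lies in $U$. Set $L = p(x)$, so $K' := gLg^{-1} \in O_G(N,NK)$ and $NK' = NK$. The natural choice is to take $V = gN$, which is open since $N$ is open in $G$.

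For $W$, I would transport $m$ to a representative of $x$. Since $N$ is normal, conjugation gives $m' := C_{g^{-1}}(m) \in M(g^{-1}NK'g) = M(NL)$, and equivariance of the Burnside-ring action (Proposition \ref{prop:mackeyact}) ensures that $m' \in \idem{NL}{N}{NL} M(NL)$, as $C_{g^{-1}}$ carries $\idem{NK'}{N}{NK'}$ to $\idem{NL}{N}{NL}$. Setting $s' = \theta_{NL}(m')$ and $W = s'(O_G(N,NL))$ then yields an open neighbourhood of $x$ whose germs are all represented by $m'$.

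The crux is a short computation showing $V \cdot W \subseteq U$. Writing $g' = gn$ with $n \in N$ and $x' = s'(L'')$ with $L'' \in O_G(N,NL)$, the germ $g'x'$ lies in $\sheaffunctor(M)_{g'L''g'^{-1}}$, and the identity $N g'L''g'^{-1} = g'NL''g'^{-1} = g'NLg'^{-1} = gNLg^{-1} = NK$ (using normality of $N$ throughout) shows $g'L''g'^{-1} \in O_G(N,NK)$. On representatives, $g'x'$ is represented by $C_{g'}(m') = C_{gng^{-1}}(m)$, and since $gng^{-1} \in N \subseteq NK$, the unital axiom for Mackey functors forces $C_{gng^{-1}} = \id_{M(NK)}$. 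Hence $g'x'$ is represented by $m$ itself, so $g'x' = s(g'L''g'^{-1}) \in U$.

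The only potential obstacle is ensuring the representing elements are compatible: specifically, that $C_{g^{-1}}(m)$ lands in the correct idempotent summand and that the conjugation identity $C_{gng^{-1}}(m) = m$ is legitimate in $M(NK)$. Both follow directly from the axioms in Definition \ref{defn:mackey} combined with normality of $N$, so the argument reduces to careful bookkeeping rather than any hidden difficulty.
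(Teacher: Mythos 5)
Your argument is correct and is essentially the paper's own proof: both produce the basic neighbourhood $gN \times s'(O_G(N,NL))$ of $(g,x)$ mapping into $s(O_G(N,NK))$, the key inputs being normality of $N$ and the fact that $N$ acts trivially (via $C_n = \id$) on the representing module. The only cosmetic difference is that you transport the representative explicitly by $C_{g^{-1}}$, whereas the paper picks a section through the germ and shrinks the normal subgroup until it agrees with $g^{-1}s$.
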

\begin{proof}4
Take a basic open set $s(O_G(N,NK))$ for $s=\theta_{NK}(m)$ with $m \in M(NK)$.
Take any point $(g,t_L)$ in the pre-image of $s(O_G(N,NK))$ under the group action map.
Then
\[
(gt)(gLg^{-1})=s(gLg^{-1}) \in \sheaffunctor(M)_{gLg^{-1}}.
\]
We can therefore find an open normal subgroup $N$ and representatives such that
\[
(gt)_{\mid_{O_G(N,NgLg^{-1})}}=s_{\mid_{O_G(N,NgLg^{-1})}},
\quad \textrm{ hence } \quad
t_{\mid_{O_G(N,NL)}}=(g^{-1}s)_{\mid_{O_G(N,NL)}}.
\]

These sections have $N$-invariant domains and are obtained from an $N$-fixed module $M(NL)$.
We can conclude that the open set:
\begin{align*}
W=gN \times t(O_G(N,NL))
\end{align*}
is contained in pre-image of $s(O_G(N,NK))$ under the group action map.
\end{proof}

\begin{proposition}\label{prop:ctsGmaplocal}
The projection map in Construction \ref{con:sheaf_mack} is a continuous $G$-map
and a local homeomorphism.
\end{proposition}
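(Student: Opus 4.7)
The plan is to verify the three required properties of $p$ in order: $G$-equivariance, continuity, and the local homeomorphism property. The $G$-equivariance is immediate from Construction \ref{con:sheaf_mack}: the conjugation maps take $\sheaffunctor(M)_K$ to $\sheaffunctor(M)_{gKg^{-1}}$, while $p$ collapses $\sheaffunctor(M)_K$ to the point $K \in \sub G$, so $p(g\cdot x)=g\cdot p(x)$ by construction.

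For continuity, I would show the preimage of each basic open $O_G(N,NK)$ of $\sub G$ is open. Given a germ $m_L$ in this preimage (so $L \in O_G(N,NK)$, equivalently $NL=NK$), the colimit description of the stalk provides a representative $m_0 \in \idem{N_0L}{N_0}{N_0L} M(N_0L)$, and by passing along a restriction map we may assume $N_0 \leqslant N$. Then $s_0 = \theta_{N_0L}(m_0)$ supplies a basic open neighbourhood $s_0(O_G(N_0,N_0L))$ of $m_L$. For any $L' \in O_G(N_0,N_0L)$, one computes $NL' = NN_0L' = NN_0L = NL = NK$, so $L' \in O_G(N,NK)$; hence the whole basic neighbourhood sits inside $p^{-1}(O_G(N,NK))$.

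For the local homeomorphism property, take a germ $m_L$ and pick $U = s(O_G(N,NL))$ where $s = \theta_{NL}(m)$ represents $m_L$. The identity $p \circ s = \id$ forces $p$ to be injective on $U$ and surjective onto the open set $O_G(N,NL)$, so $p|_U$ is a continuous bijection with set-theoretic inverse $s$. It remains to verify that $s \colon O_G(N,NL) \to U$ is continuous, which is the main obstacle. Taking a basic open $t(O_G(N',N'L'))$ meeting $U$ at a point $s(L'') = t(L'')$ in $\sheaffunctor(M)_{L''}$, the colimit defining the stalk supplies an open normal $N'' \leqslant N \cap N'$ of $G$ along which the two representing Mackey-functor elements become equal in $M(N''L'')$. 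Since $\theta_{H}(\cdot)$ commutes with applying restriction maps of $M$, this forces $s$ and $t$ to agree pointwise on the open neighbourhood $O_G(N'',N''L'')$ of $L''$, and hence $s^{-1}(t(O_G(N',N'L'))) \cap O_G(N,NL)$ is open at each of its points.

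This colimit-refinement argument is essentially the one already carried out in the proof of Lemma \ref{lem:sectionbasis}, so the substantive work has been done; the proposition amounts to packaging that refinement lemma into statements about the projection map $p$.
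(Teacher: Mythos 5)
Your proposal is correct and follows essentially the same route as the paper: continuity is proved by covering $p^{-1}(O_G(N,NK))$ with basic section-images $s_0(O_G(N_0,N_0L))$ for refined $N_0 \leqslant N$, and the local homeomorphism is obtained by restricting $p$ to a basic neighbourhood $s(O_G(N,NL))$. The only cosmetic difference is at the last step, where you verify continuity of the inverse $s$ by re-running the refinement argument of Lemma \ref{lem:sectionbasis}, whereas the paper shows directly that $p$ restricted to $s(O_G(N,NL))$ is open, since basic opens $t'(O_G(N',N'K'))$ contained in it map onto the basic opens $O_G(N',N'K')$ of the base; these are equivalent formulations resting on the same basis fact.
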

\begin{proof}
We first prove that $p$ is continuous. Given a basic open subset of $\sub G$ of the form $O_G(N,NK)$, we have that
\[
p^{-1}(O_G(N,NK))=\underset{L \in O_G(N,NK)}{\coprod} \sheaffunctor(M)_L.
\]
Take $s \in \sheaffunctor(M)_L$ and a section
\[
t \co O_G(N',N' L) \lra \underset{L \in O_G(N',N' L)}{\coprod} \sheaffunctor(M)_L.
\]
with $t(L) =s$  and $N' \leqslant N$. Then $t(O_G(N',N' L)) \subseteq p^{-1}(O_G(N,NK))$
and is open.

We now show that $p$ is a local homeomorphism.
A point $s$ in $\sheaffunctor(M)_L$ has a neighbourhood of the form $t(O_G(N,NK))$ as seen previously. 
Let $f=p_{|t(O_G(N,NK))}$, we claim that
\begin{align*}
f \co t(O_G(N,NK)) \lra O_G(N,NK)
\end{align*}
is a homeomorphism.
Since $f$ is bijective and continuous, we need only show that it is open.

A basic open set of $t(O_G(N,NK))$ is a set of the form
$t'(O_G(N',N'K')) \leqslant t(O_G(N,NK))$
for  $O_G(N',N'K') \subseteq O_G(N,NK)$.
The map $f$ sends $t'(O_G(N',N'K'))$ to $O_G(N',N'K')$, which is open in $O_G(N,NK)$.
\end{proof}

\begin{lemma}\label{lem:Qmod}
If $U$ is any open subset of $\sub G$, then the set of sections of $p$ over $U$
from Construction \ref{con:sheaf_mack} is a $\mathbb{Q}$-module.
Hence $(\sheaffunctor(M),p)$ is a sheaf of $\bQ$-modules.
\end{lemma}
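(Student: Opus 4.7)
The plan is to equip each space of sections $\Gamma(U, \sheaffunctor(M))$ with pointwise $\bQ$-module operations inherited from the stalks, and verify these operations preserve continuity. Each stalk
\[
\sheaffunctor(M)_K = \underset{N \opensub G}{\colim} \idem{NK}{N}{NK} M(NK)
\]
is a filtered colimit of $\bQ$-modules along $\bQ$-linear transition maps, and hence a $\bQ$-module. Pointwise sum and pointwise scalar multiplication of sections therefore make sense as set-theoretic operations commuting with $p$; the real content is to show they produce continuous sections.

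For continuity of addition, fix $s_1, s_2 \in \Gamma(U, \sheaffunctor(M))$ and $L \in U$. By Lemma \ref{lem:sectionbasis}, each germ $s_i(L)$ lies in a basic open set $\theta_{N_i L}(m_i)(O_G(N_i, N_i L))$ for some $N_i \opennormalsub G$ and some $m_i \in \idem{N_i L}{N_i}{N_i L} M(N_i L)$. Since $p$ is a local homeomorphism by Proposition \ref{prop:ctsGmaplocal}, $s_i$ coincides with $\theta_{N_i L}(m_i)$ on some open neighbourhood of $L$ in $\sub G$. Choose $N \opennormalsub G$ with $N \subseteq N_1 \cap N_2$ and with $O_G(N, NL)$ contained in both of these neighbourhoods, and set
\[
m_i' = \idem{NL}{N}{NL} \cdot R^{N_i L}_{NL}(m_i) \in \idem{NL}{N}{NL} M(NL).
\]
Using $N \subseteq N_i$ one checks $O_G(N, NL) \subseteq O_G(N_i, N_i L)$, since $NK = NL$ forces $N_i K = N_i L$. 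Then at each $K \in O_G(N, NL)$ the compatibility of restriction with the colimit defining $\sheaffunctor(M)_K$ shows that $\theta_{NL}(m_i')$ and $\theta_{N_i L}(m_i)$ represent the same germ. By $\bQ$-linearity of restriction and idempotent multiplication, the basic section $\theta_{NL}(m_1' + m_2')$ agrees pointwise with $s_1 + s_2$ on the neighbourhood $O_G(N, NL)$ of $L$, proving continuity of $s_1 + s_2$ at $L$.

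Scalar multiplication by $q \in \bQ$ is analogous and simpler: if $s$ is locally represented by $\theta_{NL}(m)$, then $q \cdot s$ is locally represented by the basic section $\theta_{NL}(qm)$. The $\bQ$-module axioms transfer from the stalks and restriction between open subsets of $\sub G$ visibly respects both operations, so $\Gamma(U, \sheaffunctor(M))$ is a $\bQ$-module functorially in $U$. Combined with Proposition \ref{prop:ctsGmaplocal}, which identifies $(\sheaffunctor(M), p)$ as a sheaf space, this yields the desired sheaf of $\bQ$-modules. The main obstacle is the simultaneous refinement step in the addition argument: the two input sections may a priori come from distinct modules $M(N_i L)$, and one must produce a common module $\idem{NL}{N}{NL} M(NL)$ containing representatives of both germs on a shared open neighbourhood of $L$. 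Once that refinement is in place, the remaining work is just $\bQ$-linearity of the Mackey functor structure maps.
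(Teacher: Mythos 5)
Your proof is correct and follows essentially the same route as the paper's: both arguments restrict to neighbourhoods where $p$ is a local homeomorphism, observe that the sections there come from elements of a common module $\idem{NL}{N}{NL} M(NL)$ after refining to a common open normal subgroup, and let the $\bQ$-module structure of that module define the sum (and scalar multiples) of sections. You have merely spelled out the common-refinement step that the paper's terser proof leaves implicit, and that step is carried out correctly.
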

\begin{proof}
Given two sections $s$ and $t$ of $p$ we can restrict ourselves to
neighbourhoods of $\sub G$ and $E$ such that $p$ is a local homeomorphism.
From the construction of the topology on $E$
it follows that on this region, $s$ and $t$ come from
elements of some $M(NK)$,
where $N$ is an open normal subgroup and $K$ is a closed subgroup of $G$.
The addition of $M(NK)$ defines a section $s+t$.
Similarly, the $\bQ$-action on $M(NK)$ defines $q \cdot t$ for $q \in \bQ$.
\end{proof}

\begin{lemma}\label{lem:Weyl}
The $G$-sheaf of $\bQ$-modules $(\sheaffunctor(M),p)$ defined in Construction \ref{con:sheaf_mack}
is a Weyl-$G$-sheaf.
That is, $g \in G$ gives maps of $\bQ$-modules
\[
p^{-1}(K) = \sheaffunctor(M)_K \lra \sheaffunctor(M)_{g K g^{-1}}
\]
and $\sheaffunctor(M)_K$ is $K$-fixed for each $K \in \sub G$.
\end{lemma}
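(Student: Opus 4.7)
The goal is to verify the two defining properties of a Weyl-$G$-sheaf for $(\sheaffunctor(M),p)$: that the $G$-action on stalks is by $\bQ$-module maps (upgrading the set-theoretic $G$-action of Construction \ref{con:sheaf_mack} to condition (\ref{item:sheafcomb}) of Definition \ref{defn:eqsheaf}) and that the stalk at each closed subgroup $K$ of $G$ is fixed by $K$. Both will be extracted directly from the Mackey functor axioms applied level-wise in the filtered colimit system
\[
\sheaffunctor(M)_K = \underset{N \opennormalsub G}{\colim} \idem{NK}{N}{NK} M(NK),
\]
and then passed to the colimit.

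For the first property, recall that the $G$-action on the stalks is induced by the conjugation maps $C_g \co M(NK) \to M(gNKg^{-1}) = M(N \cdot gKg^{-1})$ of $M$, using normality of $N$ in $G$. Each such $C_g$ is a $\bQ$-module homomorphism by Definition \ref{defn:mackey}, and the equivariance axiom ensures that $C_g$ commutes with the restriction maps $R^{NK}_{N'K}$ for $N' \leqslant N$. Hence the level-wise $\bQ$-linear maps are compatible with the colimit system and assemble into a $\bQ$-module map $\sheaffunctor(M)_K \to \sheaffunctor(M)_{gKg^{-1}}$, verifying condition (\ref{item:sheafcomb}) of Definition \ref{defn:eqsheaf}.

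For the $K$-fixed property, let $k \in K$ and take any representative $m \in \idem{NK}{N}{NK} M(NK)$ of a germ at $K$. Since $K \leqslant NK$ for every open normal subgroup $N$ of $G$, we have $k \in NK$, so the unital axiom of Definition \ref{defn:mackey} gives $C_k = \id_{M(NK)}$. Thus $C_k$ acts as the identity on every term of the filtered system and hence on $\sheaffunctor(M)_K$, which is precisely the Weyl-$G$-sheaf condition.

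I expect essentially no obstacle: the Weyl condition is forced immediately by the unital axiom $C_h = \id_{M(H)}$ for $h \in H$, applied with $H = NK \supseteq K \ni k$, and $\bQ$-linearity of the induced stalk action is a routine consequence of the $\bQ$-linearity of each $C_g$. The genuinely nontrivial sheaf-theoretic work has already been completed in Lemmas \ref{lem:sectionbasis} through \ref{lem:Qmod}; at this stage one simply harvests the Weyl-sheaf property from the Mackey functor axioms.
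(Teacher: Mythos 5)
Your proof is correct and follows essentially the same route as the paper: the stalk maps are $\bQ$-linear because each conjugation map $C_g$ of the Mackey functor is $\bQ$-linear and compatible with the colimit system, and the $K$-fixedness of $\sheaffunctor(M)_K$ follows because $C_k=\id_{M(NK)}$ for $k\in K\leqslant NK$ (the paper phrases this as $M(J)$ being $J$-fixed, together with the idempotents defining the stalk being $J$-fixed). No gaps; your extra remark on compatibility with the restriction maps just makes explicit what the paper leaves implicit.
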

\begin{proof}
If $K\in \sub G$, then
\[
p^{-1}(K) = \sheaffunctor(M)_K = \colim_J M(J)_{(K)}
\]
with $J$ running over all of the open subgroups containing $K$.
An element $g \in G$ induces maps
\[
\sheaffunctor(M)_K \lra \sheaffunctor(M)_{g K g^{-1}}
\]
by acting as $C_g$ on the terms $ M(J)_{(K)}$.
This action is a map of $\bQ$-modules and hence gives a
$\bQ$-module map on the stalks.

The $\bQ$-module $M(J)$ is $J$-fixed and the idempotents defining the
stalk $M(J)_{(K)}$ are $J$-fixed and hence $K$-fixed.
Thus $(\sheaffunctor(M),p)$ is a Weyl-$G$-sheaf.
\end{proof}

We summarise this work in the following theorem.
The additional statement here is that the construction is functorial.
This follows from the fact that maps of Mackey functors
commute with actions of Burnside rings.

\begin{theorem}\label{thm:macktosheaf}
For $G$ a profinite group, there is a functor
\begin{align*}
\sheaffunctor \colon \mackey{G} \lra \weylsheaf{G}
\end{align*}
which sends a Mackey functor $M$ over $G$ to a Weyl-$G$-sheaf denoted $\sheaffunctor(M)$,
as defined in Construction \ref{con:sheaf_mack}.
\end{theorem}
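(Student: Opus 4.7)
The bulk of the theorem is already established by Construction~\ref{con:sheaf_mack} together with Lemmas~\ref{lem:sectionbasis}, \ref{lem:ctsGact}, \ref{lem:Qmod}, \ref{lem:Weyl} and Proposition~\ref{prop:ctsGmaplocal}: these collectively show that $\sheaffunctor(M)$ is a well-defined Weyl-$G$-sheaf over $\sub G$. The remaining content of the theorem is the assertion that the assignment $M \mapsto \sheaffunctor(M)$ extends to a functor, so my plan focuses entirely on the morphism part.

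My plan is the following. Given a morphism $f \colon M \to N$ of rational $G$-Mackey functors, I would first construct the induced map on stalks. For each open subgroup $H$ of $G$, the component $f(H) \colon M(H) \to N(H)$ commutes with restriction, induction and conjugation; by Proposition~\ref{prop:mackeyact} and the explicit formula $[H/K] = I_K^H R_K^H$, it therefore commutes with the $\burnsidering(H)$-action. In particular, for each open normal $N \opennormalsub G$ and closed $K \closedsub G$, $f(NK)$ sends $\idem{NK}{N}{NK} M(NK)$ to $\idem{NK}{N}{NK} N(NK)$ and is compatible with the restriction maps used to form the filtered colimit in Definition~\ref{lem:mackeystalks}. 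Taking the colimit over $N$ (with $K$ fixed) yields a $\bQ$-module map $\sheaffunctor(f)_K \colon \sheaffunctor(M)_K \to \sheaffunctor(N)_K$ for each closed subgroup $K$, and disjoint union gives a map of total spaces $\sheaffunctor(f) \colon \sheaffunctor(M) \to \sheaffunctor(N)$ that covers the identity on $\sub G$.

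Next I would verify the structural properties. $G$-equivariance follows immediately because the $G$-action on stalks is induced by the conjugation maps of $M$ and $N$, and $f$ commutes with conjugation by definition. The $\bQ$-module map property on stalks is built in by construction. For continuity, I would use the basis description in Lemma~\ref{lem:sectionbasis}: a basic open set in $\sheaffunctor(N)$ has the form $\theta_{NK}^N(n)(O_G(N,NK))$ for some $n \in \idem{NK}{N}{NK} N(NK)$, and its preimage under $\sheaffunctor(f)$ is covered by basic open sets of the form $\theta_{N'K'}^M(m)(O_G(N',N'K'))$ for elements $m \in \idem{N'K'}{N'}{N'K'} M(N'K')$ whose image under $f$ agrees with $n$ on a suitable restriction. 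Because $f$ commutes with restriction and with applying idempotents, any germ in the preimage is represented by such an $m$ on a neighbourhood of its base point, which provides the required basic open neighbourhood inside the preimage.

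Finally, functoriality is routine: the identity natural transformation on $M$ clearly induces $\id$ on each $\idem{NK}{N}{NK} M(NK)$ and hence on each stalk and on the total space; composition of Mackey functor maps commutes with restriction and idempotent truncation and therefore with the colimit defining the stalks, so $\sheaffunctor(g \circ f) = \sheaffunctor(g) \circ \sheaffunctor(f)$. The main technical obstacle is the continuity check, since it requires unpacking how a germ in the image selects a compatible representative downstairs; however, this is a direct application of the construction of the colimit stalk together with the fact that $f$ respects the ingredients of that colimit.
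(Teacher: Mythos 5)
Your proposal is correct and follows essentially the same route as the paper: the object-level statement is delegated to Construction~\ref{con:sheaf_mack} and its supporting lemmas, and functoriality is obtained exactly as the paper indicates, from the fact that a map of Mackey functors commutes with restriction and with the Burnside-ring idempotents $\idem{NK}{N}{NK}$, hence passes to the colimit stalks and is continuous with respect to the section basis of Lemma~\ref{lem:sectionbasis}. Your write-up merely spells out in detail what the paper compresses into one sentence.
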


We end this subsection by noting that the maps $\theta_H$ from 
Construction \ref{con:sheaf_mack} are injective. 

\begin{lemma}\label{lem:injectivemaptostalks}
Let $m \in M(H)$. If $\theta_H(m)(L) =m_L =0$ for all $L \leqslant H$, then $m=0$ in $M(H)$. 
\end{lemma}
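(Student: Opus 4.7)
The plan is to combine the Burnside-ring action on $M(H)$ from Proposition \ref{prop:mackeyact} with a partition-of-unity argument on the compact space $\sub H/H$. Since $\burnsidering(H) = C(\sub H/H,\bQ)$ and $M(H)$ is a module over this ring (giving rise to the sheaf structure of Corollary \ref{cor:mackeytermsheaf}), the goal is essentially to show that the hypothesis forces $m$ to have vanishing germ everywhere along $\sub H/H$, and then to promote this to global vanishing by compactness.

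First I would unpack the hypothesis. For each closed subgroup $L \leqslant H$, the equality $m_L = 0$ in
\[
\sheaffunctor(M)_L = \underset{N \opensub G}{\colim}\,  \idem{NL}{N}{NL}  M(NL)
\]
means that there exists an open normal subgroup $N$ of $G$, which we can shrink to ensure $N \leqslant \core_G(H)$ (so that $NL \leqslant H$), such that $\idem{NL}{N}{NL} R^H_{NL}(m) = 0$ in $M(NL)$.

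Next I would pull this identity back into $M(H)$. Using that $R^H_{NL}$ is compatible with the Burnside module actions (Proposition \ref{prop:mackeyact}), together with the identification $O_H(N,NL) \cap \sub NL = O_{NL}(N,NL)$ (so that $R^H_{NL}(\idem{H}{N}{NL}) = \idem{NL}{N}{NL}$), we get
\[
R^H_{NL}\bigl(\idem{H}{N}{NL} \cdot m\bigr) = \idem{NL}{N}{NL}  R^H_{NL}(m) = 0.
\]
Lemma \ref{lem:fixinflate} then tells us that restriction is an isomorphism $\idem{H}{N}{NL} M(H) \cong \idem{NL}{N}{NL} M(NL)^{N_H(NL)}$, hence injective, so $\idem{H}{N}{NL} \cdot m = 0$ in $M(H)$.

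Finally I would invoke compactness. The clopen sets $\overline{O}_H(N(L),N(L) L)$ cover $\sub H/H$ as $L$ ranges over $\sub H$; extract a finite subcover $\overline{O}_H(N_1, N_1 L_1), \ldots, \overline{O}_H(N_k, N_k L_k)$ and refine it to a disjoint clopen partition $\{V_j\}$ with each $V_j$ contained in some $\overline{O}_H(N_{i(j)}, N_{i(j)} L_{i(j)})$. For each such $j$ we have $e_{V_j} = e_{V_j} \cdot \idem{H}{N_{i(j)}}{N_{i(j)} L_{i(j)}}$ in $\burnsidering(H)$, hence $e_{V_j} \cdot m = 0$, and summing the partition of unity yields
\[
m = 1 \cdot m = \sum_j e_{V_j} \cdot m = 0.
\]
The main obstacle is the middle translation step, specifically verifying that restriction sends $\idem{H}{N}{NL}$ to $\idem{NL}{N}{NL}$ so that the injectivity in Lemma \ref{lem:fixinflate} applies; once that is in place, the compactness and partition-of-unity bookkeeping is routine.
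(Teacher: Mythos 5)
Your proposal is correct and follows essentially the same route as the paper: for each $L$ you use Lemma \ref{lem:fixinflate} to upgrade the stalkwise vanishing to $\idem{H}{N}{NL}\, m = 0$ in $M(H)$, and then use compactness of $\sub H/H$ with the clopen cover $\overline{O}_H(N,NL)$ to conclude $m=0$. You merely spell out two points the paper leaves implicit, namely that $R^H_{NL}$ carries $\idem{H}{N}{NL}$ to $\idem{NL}{N}{NL}$ and the finite partition-of-unity bookkeeping at the end.
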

\begin{proof}
For $L \leqslant H$, there is an open normal subgroup $N$ such that 
the image of $m$ in $\idem{NL}{N}{NL} M(NL)$ is zero. 
As $m$ is $H$-fixed, the image of $m$ is $N_H(NL)$-fixed.
By Lemma \ref{lem:fixinflate}, restriction and applying idempotents gives an isomorphism
\[
\idem{H}{N}{NL}  M(H)
\cong
\idem{NL}{N}{NL}   M(NL)^{N_H(NL)}.
\]
Hence, we see that $\idem{H}{N}{NL} m$ is zero in $\idem{H}{N}{NL} M(H)$.
As the sets $\overline{O}_H(N,NL)$ are an open cover of the compact space $\sub H/H$, 
we see that $m$ must be zero in $M(H)$. 
\end{proof}

\section{The equivalence}\label{sec:equivalence}
In this section we prove that the two functors $\mackeyfunctor$ and $\sheaffunctor$ are inverse equivalences, see Theorem \ref{thm:equivalencemain}.

\subsection{The equivalence on sheaves}

We prove that for a Weyl-$G$-sheaf of $\bQ$-modules $F$,
we have an isomorphism of equivariant sheaves of $\bQ$-modules
\[
\sheaffunctor \circ \mackeyfunctor(F) \cong F.
\]
The first part is to show that we have an isomorphism on each stalk.
For that, we need to know how the action of the Burnside ring on Mackey functors
translates to sheaves.

\begin{lemma}\label{lem:burnsheafrest}
Let $F$ be $G$-sheaf of $\bQ$-modules on $\sub G$
and $K \leqslant H$ open subgroups of $G$.
The Mackey functor $M=\mackeyfunctor{F}$ satisfies
\[
\left(\left[H/K\right](s)\right)(L)=|(H/K)^L| s(L)
\]
for all $L \in \sub H$ and $s \in M(H) = F(\sub H)^H$.
\end{lemma}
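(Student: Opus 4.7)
The plan is to unpack the definitions and reduce to a direct verification using the $H$-equivariance of $s$ together with the counting identity $\#\{hK \in H/K : L \leqslant hKh^{-1}\} = |(H/K)^L|$.

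First I would unfold the action of the Burnside ring element $[H/K]$ on $M(H) = F(\sub H)^H$: by definition $[H/K] \cdot s = I_K^H R_K^H(s)$. Applying the restriction map of Construction \ref{con:mackey_construct} gives $R_K^H(s) = s|_{\sub K}$, a $K$-equivariant section over $\sub K$. Choosing a left transversal $T$ of $K$ in $H$, the induction formula yields
\[
([H/K] \cdot s)(L) = \Big(\sum_{h \in T} h \ast \overline{s|_{\sub K}}\Big)(L) = \sum_{h \in T} h \cdot \overline{s|_{\sub K}}(h^{-1} L h),
\]
where the overline denotes extension by zero from $\sub K$ to $\sub H$.

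Next I would analyse which summands are nonzero. The term $h \cdot \overline{s|_{\sub K}}(h^{-1} L h)$ vanishes unless $h^{-1} L h \in \sub K$, that is, $L \leqslant h K h^{-1}$. When this condition holds, the extension by zero simply evaluates to $s(h^{-1} L h)$, so the summand equals $h \cdot s(h^{-1} L h)$. Using the fact that $s$ is $H$-equivariant as a section of the $G$-sheaf $F$, together with the $H$-equivariance of $p$, we have $h \cdot s(h^{-1} L h) = s(L)$. Therefore each nonzero summand contributes exactly $s(L)$, giving
\[
([H/K] \cdot s)(L) = \#\bigl\{h \in T : L \leqslant h K h^{-1}\bigr\} \cdot s(L).
\]

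Finally I would observe that the condition $L \leqslant h K h^{-1}$ depends only on the coset $hK$ (since $K \leqslant hKh^{-1}$ is equivalent to $hK = hK$), and it says precisely that $hK$ is fixed under the left translation action of $L$ on $H/K$. Hence the count equals $|(H/K)^L|$, yielding the claimed formula. The only subtlety to verify carefully is that the cancellation $h \cdot s(h^{-1}Lh) = s(L)$ correctly uses the equivariance of $s$ as a morphism of $G$-spaces $\sub H \to E$ (rather than just as a $\bQ$-module map on sections); this is the point where the definition of the $G$-action $h \ast (-)$ on sections in Construction \ref{con:mackey_construct} must be invoked unambiguously.
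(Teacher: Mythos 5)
Your proof is correct and follows essentially the same route as the paper: unfold $[H/K](s)=I^H_K R^H_K(s)$ as a sum of translated, zero-extended restrictions of $s$, use the $H$-equivariance of $s$ to identify each nonvanishing summand with $s(L)$, and count the cosets $hK$ with $L\leqslant hKh^{-1}$ as $|(H/K)^L|$. No gaps worth noting.
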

\begin{proof}
The element $[H/K] \in \burnsidering(H)$ acts on $M(H)$ by the formula
\[
\left[H/K \right](s)=\underset{hK\in H/K}{\sum}C_h\overline{s_{|_{\sub K}}}.
\]
As $s \in M(H)$ is $H$-fixed,
the section $t=C_h\overline{s_{|_{\sub K}}}$ satisfies
$t(hJh^{-1})= s(hJh^{-1})$ for $J \in \sub K$.
It is zero outside of $\sub (hKh^{-1})$.
Hence, for $L \in \sub H$, $t(L)$ is non-zero exactly when
$L \in \sub (hKh^{-1})$. This is equivalent to the condition
$h NA \in (H/K)^L$.
We see that
\[
\left(\left[H/K\right](s)\right)(L)
=
\underset{hK\in G/K}{\sum}\overline{s_{|_{\sub (hKh^{-1})}}} (L)
=
\underset{hK\in (H/K)^L}{\sum}s(L). \qedhere
\]
\end{proof}

We can now see that a idempotent of $\burnsidering(H)$ acts by restricting a section 
and then extending the result by zero.  

\begin{proposition}\label{prop:idemactionsheaf}
For $H$ an open subgroup of $G$, let $U$ be an $H$-invariant open and closed subset of $\sub H$.
If $F$ is a $G$-sheaf then $e_U^H\mackeyfunctor(F)(H)$ is equal to the set
of $H$-equivariant sections of $\sub H$ which are zero outside $U$. 
Hence, $e_U^H\mackeyfunctor(F)(H) \cong F(U)^H$. 
\end{proposition}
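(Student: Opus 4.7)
The plan is to first identify the Burnside ring action on $\mackeyfunctor(F)(H)$ as pointwise multiplication of sections by continuous functions, and then read off the effect of the idempotent $e_U^H$ directly.

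I would start from Theorem \ref{thm:burnchar}, which identifies $\burnsidering(H) \cong C(\sub H/H, \bQ)$ by sending $[H/K]$ to the function $L/H \mapsto |(H/K)^L|$. Lemma \ref{lem:burnsheafrest} already shows that for $s \in \mackeyfunctor(F)(H) = F(\sub H)^H$, the action of $[H/K]$ on $s$ is pointwise: $([H/K] \cdot s)(L) = |(H/K)^L| \cdot s(L)$. Since the elements $[H/K]$, as $K$ ranges over open subgroups of $H$, span $\burnsidering(H)$ over $\bQ$, and both the action on $\mackeyfunctor(F)(H)$ and the identification with $C(\sub H/H, \bQ)$ are $\bQ$-linear, I would conclude that for any $f \in \burnsidering(H)$ with corresponding function $\phi \in C(\sub H/H, \bQ)$, the action satisfies $(f \cdot s)(L) = \phi(L/H) \cdot s(L)$ for every $L \in \sub H$. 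Applying this to $f = e_U^H$, whose corresponding function is the characteristic function of $U/H$, gives $(e_U^H \cdot s)(L) = s(L)$ for $L \in U$ and $0$ otherwise. Hence $e_U^H \cdot s$ is precisely the section of $F$ over $\sub H$ obtained by restricting $s$ to $U$ and extending by zero, which shows that $e_U^H \mackeyfunctor(F)(H)$ equals the set of $H$-equivariant sections on $\sub H$ vanishing outside $U$ (each element of the latter is its own image under $e_U^H$).

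For the final isomorphism $e_U^H \mackeyfunctor(F)(H) \cong F(U)^H$, I would exploit that $U$ is both open and closed, so $\sub H = U \sqcup U^c$ with both pieces open. Restriction $F(\sub H) \to F(U)$, restricted to sections vanishing on $U^c$, has the extension-by-zero map as a two-sided inverse; this is a well-defined section by the sheaf gluing axiom applied to the open cover $\{U, U^c\}$, with trivial agreement on the empty overlap. The identification is $H$-equivariant since $U$ is $H$-invariant. I expect the main obstacle to be the first step, passing from the pointwise formula on the additive basis to an arbitrary element of $\burnsidering(H)$; once this reduction of the action to pointwise multiplication is in hand, the shape of the characteristic function makes the rest routine.
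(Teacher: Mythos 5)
Your proposal is correct, and it takes a slightly different route from the paper's proof, in a way worth recording. The paper first reduces to the case where $U$ is a basic clopen set $O_H(N,NK)$ and then expands the idempotent $\idem{H}{N}{NK}$ as an explicit $\bQ$-linear combination of classes $[H/NA]$ using the Moebius-function formula of Proposition \ref{prop:idempotents}, applies Lemma \ref{lem:burnsheafrest} term by term, and reassembles the coefficients into the value of the characteristic function at $L$. You bypass both the reduction and the explicit formula: since the classes $[H/K]$ for open $K$ span $\burnsidering(H)$ over $\bQ$ (every finite $H$-set is a union of orbits with open stabilisers, as the paper itself notes when defining the action), and since both the module action on $F(\sub H)^H$ and the isomorphism of Theorem \ref{thm:burnchar} are $\bQ$-linear, Lemma \ref{lem:burnsheafrest} upgrades to the statement that every element of $\burnsidering(H)$ acts by pointwise multiplication by its corresponding function in $C(\sub H/H,\bQ)$; specialising to $e_U^H$, whose function is the characteristic function of $U/H$, settles the claim for an arbitrary $H$-invariant clopen $U$ in one step. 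What your argument buys is uniformity — no case reduction to basis sets and no appeal to the idempotent coefficients — while the paper's computation is more explicit and stays within the basic sets $O_H(N,NK)$ that it manipulates throughout. You also spell out the final identification $e_U^H\mackeyfunctor(F)(H)\cong F(U)^H$ via restriction and extension by zero over the clopen decomposition of $\sub H$ into $U$ and its complement, together with $H$-equivariance from the invariance of $U$; the paper leaves this step implicit in the word ``Hence'', so making it explicit is a genuine, if small, improvement in completeness.
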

\begin{proof}
The definition of $\mackeyfunctor(F)$ gives:
\begin{align*}
\mackeyfunctor(F)(H)=F(\sub H)^H.
\end{align*}
Since the sets of the form $O_H(N,NK)$ form a basis for $\sub H$ where $N$ is open
and normal in $H$, we can assume that $U=O_H(N,NK)$.
By Proposition \ref{prop:idempotents}
(and using the notation $\alpha_{NA,NK}$ from the proof of that proposition), 
we have the first equality below for $s \in F(\sub H)^H$.
\begin{align*}
\idem{H}{N}{NK} s(L)
&=
\underset{N \leqslant NA\leqslant NK}{\sum}
\left(
\alpha_{NA,NK} [G/NA] (s)
\right)(L) \\
&=
\Big(\underset{N \leqslant NA\leqslant NK}{\sum} 
\alpha_{NA,NK} |(G/NA)^L| \Big)s(L) \\
&=\idem{H}{N}{NK}(L) s(L)
\end{align*}
The second is Lemma \ref{lem:burnsheafrest}.
The last is an instance of how $[G/NA] \in \burnsidering(H)$
defines a function from $\sub H/H$ to $\bQ$, see Theorem \ref{thm:burnchar}.
\end{proof}

\begin{proposition}\label{prop:stalk}
If $F$ is a Weyl-$G$-Sheaf over $\sub G$, then for each $K\in \sub G$ we have
an isomorphism of $\bQ$-modules
\[
\psi_K \co \sheaffunctor \circ\mackeyfunctor(F)_K \lra F_K.
\]
\end{proposition}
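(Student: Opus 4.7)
The plan is to exhibit both sides as colimits of the same general form and identify them via a natural comparison map. First, I would unwind $\sheaffunctor \circ \mackeyfunctor(F)_K$ using the defining formula together with the identity $\mackeyfunctor(F)(NK) = F(\sub NK)^{NK}$, giving
\[
\sheaffunctor \circ \mackeyfunctor(F)_K
=
\colim_{N \opennormalsub G} \idem{NK}{N}{NK} F(\sub NK)^{NK}.
\]
Applying Proposition~\ref{prop:idemactionsheaf} to the $NK$-invariant open-closed subset $O_{NK}(N,NK) = O_G(N,NK)$ of $\sub NK$, each term becomes $F(O_G(N,NK))^{NK}$. On the other side, the sets $O_G(N,NK)$ form a neighbourhood basis of $K$ in $\sub G$, so $F_K = \colim_N F(O_G(N,NK))$. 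Forgetting equivariance then yields a natural comparison map
\[
\Phi \co \colim_N F(O_G(N,NK))^{NK} \lra F_K.
\]
That $\Phi$ is well defined on the colimit uses only that $N' \leqslant N$ implies both $O_G(N',N'K) \subseteq O_G(N,NK)$ and $N'K \leqslant NK$, so restriction of an $NK$-equivariant section is $N'K$-equivariant.

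The remaining task is to verify that $\Phi$ is bijective. Surjectivity is precisely the content of Proposition~\ref{prop:Weylequi}: any germ $s_K \in F_K$ admits an $NK$-equivariant representative over $O_G(N,NK)$ for some open normal $N$. For injectivity, an $NK$-equivariant section $s$ with $\Phi(s) = 0$ must vanish on some open neighbourhood of $K$; shrinking to a basic neighbourhood $O_G(N',N'K)$ with $N' \leqslant N$, the restriction of $s$ is the zero section, which is automatically $N'K$-equivariant, so $s$ represents zero in the colimit.

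The main obstacle is not computational but conceptual: one must correctly set up both sides as colimits over the same cofinal system of basic neighbourhoods and use Proposition~\ref{prop:idemactionsheaf} to match the Burnside-idempotent cut-off of $F(\sub NK)^{NK}$ with restriction of sections to $O_G(N,NK)$. With that bookkeeping complete, Proposition~\ref{prop:Weylequi} supplies the crucial surjectivity input and injectivity follows by a routine shrinking argument.
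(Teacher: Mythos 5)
Your proposal is correct and follows essentially the same route as the paper: expand the stalk of $\sheaffunctor\circ\mackeyfunctor(F)$ as a colimit of idempotent-truncated fixed sections, identify each term with $F(O_{NK}(N,NK))^{NK}$ via Proposition~\ref{prop:idemactionsheaf}, and compare with $F_K = \colim_N F(O_{NK}(N,NK))$ using the inclusions of equivariant sections, with Proposition~\ref{prop:Weylequi} giving surjectivity. Your explicit shrinking argument for injectivity is a welcome elaboration of a step the paper leaves implicit, but it is not a different method.
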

\begin{proof}
The left hand side can be expanded to 
\begin{align*}
\underset{N \opensub G}{\colim} \idem{NK}{N}{NK} \mackeyfunctor(F)(NK)
&=
\underset{N \opensub G}{\colim} \idem{NK}{N}{NK} F(\sub (NK))^{NK}  \\
& \cong 
\underset{N \opensub G}{\colim}  F(O_{NK}(N, NK))^{NK}
\end{align*}
using Proposition \ref{prop:idemactionsheaf} for the last term.
The right hand side is 
\[
\underset{N \opensub G}{\colim} F(O_{NK}(N, NK)). 
\]
The inclusions $F(O_{NK}(N, NK)^{NK} \to F(O_{NK}(N, NK)$
induce the desired map $\psi_K$.
It is an isomorphism as any germ can be represented by 
a section that is locally sub-equivariant by Proposition~\ref{prop:Weylequi}
\end{proof}

The proof requires $F$ to be a Weyl-$G$-sheaf as we need the
local sub-equivariance property.

We now check that the preceding isomorphism is compatible with the group actions.
\begin{lemma}\label{lem:equivequi}
Let $F$ be a Weyl-$G$-sheaf of $\bQ$-modules.
If $K\in \sub G$ and $g\in G$, then the following square commutes
\begin{align*}
\xymatrix@C+1cm{
\sheaffunctor \circ\mackeyfunctor(F)_K
\ar[r]^-{\psi_K}
\ar[d]^-{C_g}
&
F_K
\ar[d]^-{C_g} \\
\sheaffunctor \circ\mackeyfunctor(F)_{gKg^{-1}}
\ar[r]^-{\psi_{gKg^{-1}}}
&
F_{gKg^{-1}}
}
\end{align*}
where $\psi_K$ is the map defined in the proof of Proposition \ref{prop:stalk}.
\end{lemma}
\begin{proof}
A germ $s_K\in \sheaffunctor \circ\mackeyfunctor(F)_K$ 
can be represented by an $NK$-equivariant section 
\[
s \co O_{NK}(N, NK) \to F.
\]
This section is also a representative for $\psi_K(s_K)$, from which
the commutativity of the square follows. 
\end{proof}

\begin{theorem}\label{thm:weylmack}
If $F$ is any Weyl-$G$-sheaf of $\mathbb{Q}$-modules, then the maps $\psi_K$ induce an isomorphism
\begin{align*}
\psi \co \sheaffunctor \circ \mackeyfunctor(F) \lra F
\end{align*}
in the category of Weyl-$G$-Sheaves.
\end{theorem}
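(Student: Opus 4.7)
The plan is to upgrade the stalkwise isomorphism of Proposition \ref{prop:stalk} to a $G$-equivariant homeomorphism of the two total spaces, since a continuous bijection of \'etale spaces over $\sub G$ that commutes with the projection maps is automatically a sheaf isomorphism.

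First I would define a map $\Phi \co \sheaffunctor \circ \mackeyfunctor(F) \to F$ explicitly, pointwise on stalks. Given a germ $a \in \sheaffunctor \circ \mackeyfunctor(F)_K$, pick a representative $m$ in some $\idem{NK}{N}{NK} \mackeyfunctor(F)(NK)$. By Proposition \ref{prop:idemactionsheaf}, $m$ corresponds to an $NK$-equivariant section $\widehat{m}$ of $F$ over $O_{NK}(N,NK)$; set $\Phi(a)= \widehat{m}(K) \in F_K$. Proposition \ref{prop:stalk} ensures $\Phi$ is well defined and bijective on each stalk, and $\Phi$ commutes with the projections to $\sub G$ by construction.

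Next I would show $\Phi$ is a homeomorphism of total spaces. For openness, the basic open subsets of $\sheaffunctor \circ \mackeyfunctor(F)$ from Construction \ref{con:sheaf_mack} have the form $\theta_{NK}(m)(O_G(N,NK))$ with $m \in \idem{NK}{N}{NK} \mackeyfunctor(F)(NK)$; by construction $\Phi$ carries such a set bijectively onto $\widehat{m}(O_{NK}(N,NK))$, which is open in $F$. For continuity, fix a basic open $s(U) \subseteq F$ and a point $\Phi(a)=s(K)\in s(U)$. Shrinking $U$ if necessary and applying Proposition \ref{prop:Weylequi}, we may assume $s$ is an $N'K$-equivariant section over $U=O_{N'K}(N',N'K)$ for some open normal $N' \trianglelefteq G$. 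This sub-equivariant section corresponds via Proposition \ref{prop:idemactionsheaf} to an element $m' \in \idem{N'K}{N'}{N'K} \mackeyfunctor(F)(N'K)$, and the basic open $\theta_{N'K}(m')(O_G(N',N'K))$ contains $a$ and is sent by $\Phi$ into $s(U)$. Hence $\Phi^{-1}(s(U))$ is open.

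Finally, the $\bQ$-linearity of the stalk isomorphisms makes $\Phi$ a map of sheaves of $\bQ$-modules, while Lemma \ref{lem:equivequi} provides the required $G$-equivariance. Assembling these facts yields the desired isomorphism in the category of Weyl-$G$-sheaves. The main obstacle is the second step, as it is the only place where the two distinct topologies (that on $F$ from the sheaf space structure, and the basis-generated topology on $\sheaffunctor\circ\mackeyfunctor(F)$ built in Construction \ref{con:sheaf_mack}) must be compared; this comparison is driven entirely by the local sub-equivariance property of Proposition \ref{prop:Weylequi}, which is precisely why the hypothesis that $F$ be a Weyl-$G$-sheaf is essential.
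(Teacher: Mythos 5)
Your proposal is correct and takes essentially the same route as the paper: the paper's proof combines the stalkwise isomorphism of Proposition \ref{prop:stalk} with the equivariance of Lemma \ref{lem:equivequi} and then notes the two sheaves have the same sections (the map being restriction of sections), which is precisely the content of your homeomorphism check. Your write-up simply makes explicit, via the bases and Propositions \ref{prop:idemactionsheaf} and \ref{prop:Weylequi}, the topological comparison that the paper compresses into a single sentence.
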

\begin{proof}
By Proposition \ref{prop:stalk} and Lemma \ref{lem:equivequi}, we see that they are isomorphic as $G$-sets.
To prove that they are topologically equivalent, we need to show that they have the same sections.
As both objects are sheaves, it suffices to do so locally, which amounts to considering
sections that represent stalks.
By Proposition \ref{prop:stalk}, a germ over $K \in \sub G$
of $\sheaffunctor \circ \mackeyfunctor(F)$ is represented by 
some element of $F(O_{NK}(N, NK) )^{NK}$.
By the local sub-equivariance property of Proposition~\ref{prop:Weylequi}
the same is true for a stalk of $F$.
\end{proof}

\subsection{The equivalence on Mackey functors} The starting point is to give a levelwise isomorphism of 
$\bQ$-modules.

\begin{proposition}\label{prop:homeo}
If $H$ is an open subgroup of a profinite group $G$ and $M$ a Mackey functor for $G$, then there
is an isomorphism of $\bQ$-modules (constructed in the proof)
\[
\theta_H \co M(H) \lra \mackeyfunctor \circ \sheaffunctor(M)(H).
\]
\end{proposition}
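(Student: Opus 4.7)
The plan is to take $\theta(H)$ to be the map $\theta_H$ from Construction \ref{con:sheaf_mack}, namely $m \mapsto (L \mapsto m_L \in \sheaffunctor(M)_L)$. That construction already verifies $\theta_H$ lands in $H$-equivariant sections and is additive, so only bijectivity remains. Injectivity is exactly Lemma \ref{lem:injectivemaptostalks}, leaving surjectivity as the substantive point.

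For surjectivity I would use a compactness-and-inflate strategy. Given $s \in \sheaffunctor(M)(\sub H)^H$, unpack the definition of the topology on $\sheaffunctor(M)$ to see that each germ $s(L)$ is represented by some $m_L \in \idem{N_LL}{N_L}{N_LL} M(N_LL)$ with $N_L \opennormalsub G$, and after shrinking $N_L$ we may assume $N_L \leqslant \core_G(H)$ and that $s$ agrees with $\theta_{N_LL}(m_L)$ on all of $O_G(N_L, N_LL)$. Since $\sub H/H$ is compact (closed in $\sub G/G$ by Lemma \ref{lem:subHopenclosed}), finitely many such basic sets suffice, and intersecting the associated $N_L$'s yields a common $N \opennormalsub G$ with $N \leqslant H$. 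The finite partition $\sub H/H = \bigsqcup_{j=1}^n \overline{O}_H(N, J_j)$ indexed by the $H$-conjugacy classes of the relevant $NL_i$'s then supports $s$, and on each $O_H(N, J_j) = O_{J_j}(N, J_j)$ the restriction $s|_{O_H(N, J_j)}$ is represented by some $m_j \in \idem{J_j}{N}{J_j} M(J_j)$, obtained by restricting the appropriate $m_{L_i}$ to $M(J_j)$ and applying the idempotent.

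The next step is to promote each $m_j$ to an $N_H(J_j)$-fixed element so that Lemma \ref{lem:fixinflate} applies. For $h \in N_H(J_j)$, the set $O_H(N, J_j)$ is $h$-invariant, so $H$-equivariance of $s$ forces $(C_h(m_j) - m_j)_L = 0$ in $\sheaffunctor(M)_L$ for every $L \in O_H(N, J_j)$. Since $\idem{J_j}{N}{J_j}$ annihilates stalks at $L \in \sub J_j \setminus O_{J_j}(N, J_j)$, the element $C_h(m_j) - m_j \in \idem{J_j}{N}{J_j} M(J_j)$ has vanishing stalk at every $L \in \sub J_j$. Lemma \ref{lem:injectivemaptostalks} at level $J_j$ then gives $C_h(m_j) = m_j$, so $m_j$ is $N_H(J_j)$-fixed. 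Lemma \ref{lem:fixinflate} produces $n_j \in \idem{H}{N}{J_j} M(H)$ restricting to $m_j$, and the candidate preimage is $m = \sum_{j=1}^n n_j \in M(H)$. The idempotents $\idem{H}{N}{J_j}$ are pairwise orthogonal since the $\overline{O}_H(N, J_j)$ are disjoint, so for $L \in O_H(N, J_j)$ the stalk of $\theta(H)(m)$ at $L$ reduces to that of $n_j$, which restricts to $(m_j)_L = s(L)$, giving $\theta(H)(m) = s$. The main obstacle is the $N_H(J_j)$-fixedness step: it exploits that $\idem{J_j}{N}{J_j}$ kills stalks outside $O_H(N, J_j)$ so that Lemma \ref{lem:injectivemaptostalks} upgrades the stalkwise identity inside $O_H(N, J_j)$ into the global identity $C_h(m_j) = m_j$ in $M(J_j)$.
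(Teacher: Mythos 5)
Your proposal is correct, and it follows the same outline as the paper: take $\theta(H)=\theta_H$ from Construction \ref{con:sheaf_mack}, get injectivity from Lemma \ref{lem:injectivemaptostalks}, and reduce everything to surjectivity onto the $H$-equivariant continuous sections. The difference is in how surjectivity is handled: the paper disposes of it in one sentence (``the topology is constructed from sections made from $\theta_H$''), whereas you actually prove it, via compactness of $\sub H$, the clopen decomposition into pieces $O_H(N,J)$, the upgrade of each local representative $m_j$ to an $N_H(J_j)$-fixed element (using that $\idem{J_j}{N}{J_j}$ kills stalks outside $O_{J_j}(N,J_j)$ together with Lemma \ref{lem:injectivemaptostalks}), and finally Lemma \ref{lem:fixinflate} to inflate each $m_j$ to $n_j\in\idem{H}{N}{J_j}M(H)$. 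This is exactly the argument the paper's one-liner implicitly relies on (consistent with the introduction's remark that Lemma \ref{lem:fixinflate} is key to the equivalence), so your version buys a complete, checkable proof at the cost of length. Two small points to tidy, neither a gap: the partition of $\sub H/H$ should be indexed by $H$-conjugacy classes of \emph{all} open subgroups $J$ with $N\leqslant J\leqslant H$ (each such piece $O_H(N,J)$ is nonempty and is contained in one of the finitely many covering sets, so a representative $m_j$ exists for each), not only by the classes of the $NL_i$; and having checked $\theta(H)(m)=s$ on the representative pieces $O_H(N,J_j)$, you should note that equality on the remaining pieces $O_H(N,hJ_jh^{-1})$ follows because both $\theta(H)(m)$ and $s$ are $H$-equivariant and the chosen pieces meet every $H$-orbit of the partition.
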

\begin{proof}
Recall from Constructions \ref{con:mackey_construct} and \ref{con:sheaf_mack}
that the stalks of $\sheaffunctor(M)$ are given by
\[
\sheaffunctor(M)_K = \underset{J \closedsuper K}{\colim} M(J)_{(K)} =
\underset{N \opensub G}{\colim} M(NK)_{(K)} =
\underset{N \opensub G}{\colim} \idem{NK}{N}{NK}  M(NK).
\]
Recall the map $\theta_H$ from Construction \ref{con:sheaf_mack}
\[
\theta_H \colon M(H) 
\lra 
\Big\lbrace s \colon \sub  H\lra \coprod_{K \in \sub H} \sheaffunctor(M)_K \Big\rbrace ^H 
\]
where $\theta_H(m)(K) = m_K$, the image of $m$ in $\sheaffunctor(M)_K$. The
topology on $\sheaffunctor(M)$ is defined in terms of $\theta_H(m)$ for varying $H$ and $m$.

The composite of the two functors at $H$ is
\begin{align*}
\mackeyfunctor \circ \sheaffunctor(M)(H)  &=
\sheaffunctor(M)(\sub H)^H \\
&=
\Big\lbrace s \colon \sub  H\lra \coprod_{K \in \sub H} \sheaffunctor(M)_K \Bigm| \textrm{$s$ continuous},\,p\circ s=\id \Big\rbrace^H. 
\end{align*}
Since the topology was defined using the images of $\theta_H$, it follows 
that $\theta_H(m)$ is a continuous section. 
Thus $\theta_H$ defines a map as in the statement. 
Moreover, it is additive and surjective, 
injectivity follows from Lemma \ref{lem:injectivemaptostalks}.
\end{proof}

\begin{theorem}\label{thm:mackeyweyl}
The maps $\theta_H$ induce a natural isomorphism of Mackey functors
\[
\theta \colon M \lra \mackeyfunctor \circ\sheaffunctor(M).
\]
\end{theorem}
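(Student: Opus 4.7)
The plan is to use Proposition \ref{prop:homeo}, which already establishes that each component $\theta(H)$ is a $\bQ$-module isomorphism. It therefore remains to check naturality in $M$ and compatibility with the three Mackey structure maps: restriction, conjugation, and induction. Naturality in $M$ is immediate because a morphism $f \co M \to N$ of Mackey functors induces a map on each stalk by the universal property of the colimit defining $\sheaffunctor(M)_K$, and these stalkwise maps assemble into a sheaf morphism intertwining $\theta$ by construction.

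I would dispose of conjugation and restriction first, as these are routine. The $G$-action on $\sheaffunctor(M)$ was defined precisely so that $g \in G$ acts on the stalk $\sheaffunctor(M)_K$ as $C_g$ on representatives in the appropriate $M(J)$, so $\theta(gHg^{-1}) \circ C_g = C_g \circ \theta(H)$ commutes on the nose. For restriction, the section $R^H_K \theta(H)(m)$ sends $L \in \sub K$ to $m_L$, while $\theta(K)(R^H_K m)$ sends $L$ to $(R^H_K m)_L$; these agree because the stalk at $L$ is a colimit over open subgroups containing $L$ and restriction is a structure morphism of that colimit system.

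The delicate step is compatibility with induction. Given $K \leqslant H$ open in $G$ and $m \in M(K)$, one has to verify
\[
\theta(H)(I^H_K m)(L) = \bigl(I^H_K \theta(K)(m)\bigr)(L)
\]
for every closed $L \leqslant H$. I would fix an open normal subgroup $N \opennormalsub G$ small enough that both sides are detected in $\idem{NL}{N}{NL} M(NL)$ via Lemma \ref{lem:fixinflate}. On the Mackey side, the Mackey axiom expands $R^H_{NL} I^H_K m$ as a sum over double cosets $[NL \backslash H / K]$, and multiplying by $\idem{NL}{N}{NL}$, with Frobenius reciprocity from Proposition \ref{prop:mackeyact}, annihilates every summand indexed by $x$ for which $L$ does not lie, up to $NL$-conjugation, inside $xKx^{-1}$. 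On the sheaf side, the induction formula from Construction \ref{con:mackey_construct} together with Lemma \ref{lem:burnsheafrest} identifies $(I^H_K \theta(K)(m))(L)$ as a sum indexed precisely by those cosets $hK \in H/K$ with $L \leqslant hKh^{-1}$, with summand $h \cdot m_{h^{-1}Lh}$. The main obstacle is to establish the bijection between the surviving double cosets $x$ and the relevant $H/K$-cosets $hK$, and to check that the two summands agree under the stalk identification of Lemma \ref{lem:fixinflate}; this will follow by unwinding how $C_x R^K_{K \cap x^{-1}NLx} m$ passes to $x \cdot m_{x^{-1}Lx}$ in the stalk.

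Combining these three compatibilities with the levelwise isomorphisms of Proposition \ref{prop:homeo} yields that $\theta$ is a natural isomorphism of Mackey functors, completing the proof.
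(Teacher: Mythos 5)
Your proposal follows essentially the same route as the paper's proof: levelwise isomorphisms from Proposition \ref{prop:homeo}, routine stalkwise checks for conjugation and restriction, and for induction the expansion of $R^H_{NL} I^H_K(m)$ via the Mackey axiom, Frobenius reciprocity to absorb $\idem{NL}{N}{NL}$, the support argument forcing $L \leqslant xKx^{-1}$, and the identification of the surviving double cosets with the cosets $hK \in H/K$ satisfying $h^{-1}Lh \leqslant K$. The only cosmetic differences (routing the sheaf-side count through Lemma \ref{lem:burnsheafrest} and citing Lemma \ref{lem:fixinflate} where the paper just uses the colimit description of the stalk) do not change the argument.
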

\begin{proof}
We need to show that the correspondence between $M$ and $\mackeyfunctor \circ\sheaffunctor(M)$ commutes with the three maps; restriction, induction and conjugation. 

For conjugation and restriction one can calculate the effect directly on a stalk of some $m \in M(H)$, 
for $H$ an open subgroup of $G$.
One will see that the map $\theta$ is compatible with conjugation as the restriction maps of 
a Mackey functor are equivariant and the action of the Burnside ring is compatible with conjugation. 
Similarly, since $\theta$ is defined via the restriction maps of Mackey functors and the restriction of
$\mackeyfunctor \circ\sheaffunctor(M)$ is defined by restricting a section to a smaller domain,
$\theta$ is compatible with restrictions.

Induction requires more work. Suppose $K\leqslant H$ are open and $s\in M(K)$. We must show 
that the square 
\begin{align*}
\xymatrix{
M(H)
\ar[r]_(0.3){\theta_H}
&\mackeyfunctor \circ \sheaffunctor (M)(H)\\
M(K) 
\ar[u]^{I^H_K}
\ar[r]_-{\theta_{K}}
&\mackeyfunctor \circ \sheaffunctor(M)(K)
\ar[u]^{\overline{I^H_K}}}
\end{align*}
commutes, where $\overline{I^H_K}$ is the induction map for $\mackeyfunctor \circ \sheaffunctor(M)$. 

Chasing through the definitions gives 
\begin{align*}
\left( \overline{I^H_K} \theta_K(m) \right)_L
=
\Big(\underset{h \in H/K}{\sum} h\ast \overline{ \theta_K(m) }\Big)_L 
=
\sum_{\substack{h \in H/K \\ h^{-1} L h \leqslant K}} (h \ast \theta_K(m))_L.
\end{align*}
In the final equality we replace the extension by zero by an explicit condition
on the subgroups.  

Conversely, we use the Mackey axiom to rewrite $(\theta_H I^H_K(m))_L \in \sheaffunctor(M)_L$. 
We may choose a representative $a \in \idem{NL}{N}{NL} M(NL)$ 
for this germ, with $N \leqslant K$ an open normal subgroup of $G$.  
Then 
\begin{align*}
a 
& = 
\idem{NL}{N}{NL}  R^H_{NL} I^H_K(m) \\
& = 
\idem{NL}{N}{NL}  \underset{\left[ NL \backslash H\slash K \right]}{\sum} 
I^{NL}_{NL\cap xKx^{-1}} \circ  C_x \circ  R^K_{K\cap x^{-1}NLx} (m) \\
&=
\idem{NL}{N}{NL}  \underset{\left[ NL\backslash H\slash K \right]}{\sum} 
I^{NL}_{NL\cap xKx^{-1}}\circ R^{xKx^{-1}}_{NL \cap xKx^{-1}}\circ C_x (m). \\
&=
\underset{\left[ NL\backslash H\slash K \right]}{\sum} 
I^{NL}_{NL\cap xKx^{-1}} \Big( 
R^{NL}_{NL \cap xKx^{-1}} \big( \idem{NL}{N}{NL} \big) \cdot R^{xKx^{-1}}_{NL \cap xKx^{-1}} C_x (m) \Big). 
\end{align*}
Consider the term $R^{NL}_{NL \cap xKx^{-1}} \idem{NL}{N}{NL}$. 
If this is non-zero, there is a subgroup $J$ of $NL \cap xKx^{-1}$
in the support of the idempotent. Hence,
\[
L \leqslant NL =NJ \leqslant xKx^{-1}.
\]
If $x \in \left[ NL\backslash H\slash K \right] = \left[ L\backslash H\slash K \right]$ is such that 
$L\leqslant xKx^{-1}$, then $NL \cap xKx^{-1} =NL$.
The corresponding summand is
\begin{align*}
\idem{NL}{N}{NL} \cdot R^{xKx^{-1}}_{NL}  C_x (m)
\end{align*}
which is sent to $C_x (m)_L$ in the $L$-stalk. 

The collection of $x \in \left[ L\backslash H\slash K \right]$ such that $L\leqslant xKx^{-1}$,
is exactly the set $hK \in H/K$ such that $h^{-1} L h \leqslant K$
(the canonical isomorphism sends $LxK$ to $xK$).
Hence,
\[
(\theta_H I^H_K(m))_L 
= 
\sum_{\substack{h \in H/K \\ h^{-1} L h \leqslant K}} (\theta_{hKh^{-1}} C_h(m))_L  
=
\sum_{\substack{h \in H/K \\ h^{-1} L h \leqslant K}} (h \ast \theta_{K} (m))_L. \qedhere
\]
\end{proof}

We combine Theorems \ref{thm:weylmack} and \ref{thm:mackeyweyl} and note that our constructions are 
compatible with maps in the two categories to obtain the main result.

\begin{theorem}\label{thm:equivalencemain}
If $G$ is a profinite group then the category of rational $G$-Mackey functors is equivalent to the category of Weyl-$G$-sheaves over $\sub G$. 
Furthermore, this is an exact equivalence.
\end{theorem}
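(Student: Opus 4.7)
The plan is to assemble the main theorem by combining the natural isomorphisms already established in Theorems \ref{thm:weylmack} and \ref{thm:mackeyweyl}, and then verify exactness. The construction of the functors $\mackeyfunctor$ and $\sheaffunctor$ together with their functoriality was completed in Theorems \ref{thm:sheaftomack} and \ref{thm:macktosheaf}. Theorem \ref{thm:weylmack} provides an isomorphism $\sheaffunctor \circ \mackeyfunctor(F) \cong F$ for every Weyl-$G$-sheaf $F$, and Theorem \ref{thm:mackeyweyl} provides $\theta \colon M \cong \mackeyfunctor \circ \sheaffunctor(M)$ for every Mackey functor $M$. What remains is to package these as natural isomorphisms and confirm exactness.

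First I would confirm naturality of the two isomorphisms. For $\theta \colon M \to \mackeyfunctor \circ \sheaffunctor(M)$, naturality in $M$ follows immediately from the construction of $\theta_H$ via the universal maps $M(H) \to \idem{NK}{N}{NK} M(NK)$ into the colimits defining the stalks of $\sheaffunctor(M)$, combined with the fact already checked in Theorem \ref{thm:mackeyweyl} that $\theta$ commutes with restriction, induction, and conjugation. For the isomorphism $\sheaffunctor \circ \mackeyfunctor(F) \cong F$, the map was defined on sections by restriction, which is manifestly natural in morphisms of sheaves. Together these produce the claimed equivalence of categories.

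For the exact equivalence statement, the cleanest route is to observe that both $\mackey{G}$ and $\weylsheaf{G}$ are $\bQ$-linear abelian categories, and that any equivalence of abelian categories preserves all limits and colimits, hence is exact. For transparency I would also sketch a direct verification: a sequence of Mackey functors is exact if and only if it is exact on each $M(H)$, and a sequence of sheaves is exact if and only if it is exact on stalks. The functor $\sheaffunctor$ is then exact because the stalk formula $\sheaffunctor(M)_K = \colim_N \idem{NK}{N}{NK} M(NK)$ combines filtered colimits with the action of idempotents, both of which are exact operations on $\bQ$-modules. Exactness of $\mackeyfunctor$ then follows from the equivalence.

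The main obstacle is largely behind us, since the substantive content was handled in the preceding sections; the remaining subtlety worth attention is to confirm that exactness really takes place inside the full subcategory $\weylsheaf{G}$ rather than merely in all $G$-sheaves. This reduces to checking that the Weyl condition (the stalk $E_K$ being $K$-fixed) is stable under the formation of kernels, cokernels and images of sheaf maps; since these are computed stalkwise and the category of $K$-fixed $\bQ$-modules is a Serre subcategory of $\bQ[K]$-modules (in the discrete sense), this stability is automatic.
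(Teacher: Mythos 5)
Your proposal matches the paper's own argument: the theorem is obtained by combining Theorems \ref{thm:weylmack} and \ref{thm:mackeyweyl} and noting that the constructions are functorial, exactly as you do. Your additional justification of exactness (an equivalence of abelian categories is exact, with the stalkwise/levelwise check and the observation that the Weyl condition is preserved by kernels and cokernels) is correct and simply fills in detail the paper leaves implicit.
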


\section{Consequences}\label{sec:consequences}

\subsection{Examples}

Just as non-equivariantly, one can make constant sheaves and skyscraper sheaves.
We leave the details to the second author's thesis \cite{sugruethesis}
and the forthcoming paper by the authors \cite{BSsheaves}. 
The stalks of the constant sheaf at a $\bQ$-module $B$ are all isomorphic to $B$.
The sections are as for the non-equivariant constant sheaf.

\begin{example}
The Burnside ring Mackey functor corresponds to the constant Weyl-$G$-Sheaf at $\bQ$. 
The constant sheaf at a $\bQ$-module $B$ corresponds to the Mackey functor $\burnsidering \otimes B$.
\end{example}

For an equivariant skyscraper sheaf, one picks a closed subgroup $K$ of $G$
and a discrete $W_G K$-module $C$. 
The stalk at $K$ is $C$, for $g \in G$, the stalk at $gKg^{-1}$ is $gC$ 
(with the expected $gKg^{-1}$-action), all other stalks are zero. 
The sections of this sheaf over $U$ are given by a direct sum of copies of $gB$, 
with one copy for each $gKg^{-1} \in U$, 
where the sums runs over $G/K$. 

\begin{example}
For $K$ a closed subgroup of $G$, we can describe the Mackey functor $M$
that corresponds to the skyscraper sheaf $\sky_K(C)$ 
built from a discrete $W_G K$-module $C$.
At an open subgroup $H$, $M(H)$ is the $H$-fixed points of $\sky_K(C)(\sub H)$. 
When $K$ is open, there are only finitely many distinct conjugates of $K$, 
hence we can express $M(H)$ as a finite direct sum
indexed by the distinct conjugates $gKg^{-1}$ of $K$ that are subgroups of $H$
\[
M(H)=\sky_K(C)(\sub H)^H = \Big( \bigoplus_{ \{gKg^{-1} \leqslant H\} } gC  \Big)^H.
\]
If $K$ is the trivial group, then the Mackey functor $M_C$ corresponding to $\sky_{\{ e\}}(C)$ is 
the fixed point Mackey functor, see Example \ref{ex:fixpointmackey}, 
of the discrete $G$-module $C$. 

It can be illuminating to perform the reverse calculation. 
The fixed point Mackey functor $M_C$ is cohomological, 
that is, restriction followed by induction is multiplication by the subgroup index
\[
I_K^H R_K^H = [H,K].
\]
See Th\'{e}venaz and Webb \cite[Section 16]{tw95} for further details of cohomological Mackey functors.

This equation simplifies the behaviour of idempotents, particularly when we calculate the stalk of
$\sheaffunctor(M_C)$ at some closed subgroup $K$. 
As a self-map of $M_C(NK)$,
\begin{align*}
\idem{NK}{N}{NK} 
& =\sum_{N \leqslant NA\leqslant NK}\frac{|NA|}{|N_NK(NK)|}\mu(NA,NK) [NK:NA] \\
& =\sum_{N \leqslant NA\leqslant NK}\frac{|NA|}{|NK|}\mu(NA,NK) [NK:NA] \\
& =\sum_{N \leqslant NA\leqslant NK} \mu(NA,NK) 
\end{align*}
which is zero for non-trivial $K$ and the identity map for $K$ the trivial group. 
Hence, the stalks at non-trivial subgroups are zero. 
We then use the fact that $C$ is discrete to complete the calculation 
\[
C = \underset{N \opensub G}{\colim} C^N  = \underset{N \opensub G}{\colim} M_C(N).
\]
\end{example}

\subsection{Weyl Sheaves from equivariant sheaves}\label{subsec:weyltosheaf}

As mentioned, Construction \ref{con:mackey_construct} does not require the input
$G$-sheaf to be a Weyl-$G$-sheaf. Hence, given a sheaf $F$, 
$\sheaffunctor \circ \mackeyfunctor (F)$ is a Weyl-$G$-sheaf. 
We can construct this operation explicitly. 

\begin{proposition}
If $F$ is a $G$-sheaf then the underlying set of 
$\sheaffunctor \circ \mackeyfunctor (F)$ is 
\begin{align*}
\underset{K\in \sub G}{\coprod}F_K^K.
\end{align*}
Furthermore, the map
$F \to \sheaffunctor \circ \mackeyfunctor (F)$
is induced from the stalkwise inclusions of fixed points. 
\end{proposition}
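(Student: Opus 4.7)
The plan is to compute the stalks of $\sheaffunctor \circ \mackeyfunctor(F)$ directly from the definitions, running in parallel with the proof of Proposition \ref{prop:stalk} but without invoking the Weyl condition. By Construction \ref{con:sheaf_mack} together with Proposition \ref{prop:idemactionsheaf} applied to the $NK$-invariant open-closed subset $O_{NK}(N,NK) \subseteq \sub(NK)$,
\[
\sheaffunctor(\mackeyfunctor(F))_K
= \underset{N \opennormalsub G}{\colim} \idem{NK}{N}{NK} F(\sub(NK))^{NK}
\cong \underset{N}{\colim} F(O_{NK}(N,NK))^{NK}.
\]
Since the sets $O_{NK}(N,NK)$ form a neighbourhood basis of $K$, the forgetful inclusions $F(U)^{NK} \hookrightarrow F(U)$ induce a natural homomorphism from this colimit into $F_K = \colim_N F(O_{NK}(N,NK))$. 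For an $NK$-equivariant section $s$, each $k \in K$ satisfies $s(K) = s(kKk^{-1}) = k \cdot s(K)$, so the image lies in $F_K^K$. Injectivity is immediate: two $NK$-equivariant representatives that agree at $K$ agree on a basic neighbourhood of $K$, which may be shrunk to one of the form $O_{NK}(N,NK)$.

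The key step is surjectivity onto $F_K^K$. Given a $K$-fixed germ $t \in F_K^K$, I would follow the compactness argument of Proposition \ref{prop:Weylequi}: any representative section $s$ over $O_{N'K}(N', N'K)$ has open and compact image in the total space, so there exists an open normal $N'' \opennormalsub G$ that fixes this image setwise, and after shrinking to $O_{\tilde N K}(\tilde N, \tilde N K)$ with $\tilde N \leqslant N' \cap N''$ the section $s$ becomes $\tilde N$-equivariant. To upgrade to $\tilde N K$-equivariance, note that $\tilde N K / \tilde N \cong K/(K \cap \tilde N)$ is finite since $\tilde N$ is open. For each coset representative $k$, the translate $k \ast s$ agrees with $s$ at $K$ by the $K$-fixedness of $t$; as the projection $p$ is a local homeomorphism, two sections agreeing at a point agree on an open neighbourhood of that point. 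Intersecting the resulting neighbourhoods over the finitely many cosets and picking a basic open $O_{NK}(N,NK)$ inside the intersection (for some $N \leqslant \tilde N$) produces the required $NK$-equivariant representative of $t$.

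Finally, to recognise the natural map as the stalkwise inclusion of fixed points, observe that by Construction \ref{con:sheaf_mack} the topology on $\sheaffunctor \circ \mackeyfunctor(F)$ is generated by images of $NK$-equivariant sections of $F$ defined on sets of the form $O_{NK}(N,NK)$. Such sections are themselves continuous sections of $F$, and the assignment is manifestly $G$-equivariant, so the stalkwise inclusions $F_K^K \hookrightarrow F_K$ assemble into a continuous $G$-equivariant map of sheaves $\sheaffunctor \circ \mackeyfunctor(F) \hookrightarrow F$. The main obstacle is the surjectivity step: turning the pointwise $K$-fixedness of a germ into genuine local $NK$-equivariance of a representative, which is accomplished by combining the compactness argument from Proposition \ref{prop:Weylequi} with the finiteness of $NK/N$ and the local-homeomorphism structure of the sheaf projection.
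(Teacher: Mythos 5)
Your proposal is correct and follows essentially the same route as the paper: identify the stalk as $\colim_N \idem{NK}{N}{NK}\,\mackeyfunctor(F)(NK) \cong \colim_N F(O_{NK}(N,NK))^{NK}$ via Proposition \ref{prop:idemactionsheaf}, and then identify this with $F_K^K$ by representing a $K$-fixed germ by a locally $NK$-equivariant section. The paper compresses this into a single displayed isomorphism (deferring the representability of $K$-fixed germs to the remark invoking Proposition \ref{prop:Weylequi}), whereas you supply the details — the $\tilde N$-equivariance by compactness, the upgrade to $NK$-equivariance using finiteness of $K/(K\cap\tilde N)$ and the local homeomorphism — which is a useful elaboration rather than a different argument.
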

\begin{proof}
The stalks are given by 
\[
\underset{N\opennormalsub G}{\colim}\,e^{NK}_{O(N,NK)} F(\sub NK)^{NK} \cong F_K^K.   \qedhere
\]
\end{proof}
One can prove directly that the stalks $F_K^K$ form a Weyl-$G$-sheaf, 
as Proposition \ref{prop:Weylequi} implies that a $K$-fixed germ can be represented 
by a section that is $N K$-equivariant for some open normal subgroup $N$.
One can also prove that a map from a Weyl-$G$-sheaf $E$ to $F$ will factor through 
the composite $\sheaffunctor \circ \mackeyfunctor (F)$. 
See \cite[Section 10]{BSsheaves} for further such results.

\bibliographystyle{alpha}
\bibliography{ourbib}

\end{document}